  \newtheorem{theorem}{Theorem}[section]
  \newtheorem{lemma}[theorem]{Lemma}
  \newtheorem{claim}[theorem]{Claim}
  \newtheorem{corollary}[theorem]{Corollary}
  \theoremstyle{definition}
  \newtheorem{definition}[theorem]{Definition}
  \newtheorem*{remark}{Remark}
  \newtheorem{conjecture}[theorem]{Conjecture}
  \numberwithin{equation}{section}
  \newcommand\restr[2]{{% we make the whole thing an ordinary symbol
  \left.\kern-\nulldelimiterspace % automatically resize the bar with \right
  #1 % the function
  \littletaller % pretend it's a little taller at normal size
  \right|_{#2} % this is the delimiter
  }}
\title{Minimizing edge-length polyhedrons}
\author[A. Valfells]{Asgeir Valfells}
\address{Department of Mathematics, Rice University, Houston, TX~77005, USA}
\email{Asgeir@rice.edu}
\thanks{}
\begin{document}
\begin{abstract}
    A 1957 conjecture by Zdzislaw Melzak, that the unit volume polyhedron with least edge length was a triangular right prism, with edge length $2^{2/3}3^{11/6}$. We present a variety of necessary local criteria for any minimizer. In the case that we are restricted to convex polyhedrons we demonstrate that all vertices must be of degree three, the number of triangular faces is at most 14, and we describe the behavior of quadrilateral faces should they become arbitrarily small.
\end{abstract}

\maketitle
\section{Introduction}
\subsection{The Essential History}

In 1965 Zdzislaw Melzak conjectured that the convex unit volume polyhedron with least edge length was a triangular right prism, with edge length $2^{2/3}3^{11/6}\approx 11.896$\cite{melzak_qs}. To this day it is not known whether a minimizer exists. In the years since (and in fact preceding) progress on the problem has been sparse, surprising for a problem that could have been posed in antiquity. The current results come from two perspectives, variational arguments and compactness arguments.\\

What's known in the bona fide polyhedral case can be done with straightforward variational calculations. Melzak himself demonstrated that the regular tetrahedron was optimal among tetrahedrons\cite{melzak_tetrahedrons} and Berger\cite{berger_2002} showed that the triangular prism was optimal among the class of prisms, regular polyhedrons, and pyramids. More exciting is Berger's approach to the convex problem via compactness arguments. Admitting generalized candidates, which compactify the space of convex polyhedrons, he demonstrates the existence of a minimizer in that broader space with accompanying regularity results. This work was later extended to higher dimension by Scott \cite{RyanScott}.

\subsection{Methods and Results}

In the vein of Melzak we'll approach the problem from a variational perspective. In the writings of Melzak and Berger the polyhedrons are viewed as a collection of vertices which are then perturbed. This is a problem in the generic case since most perturbations would mean the perturbed vertex is no longer co-planar with the vertices that it shared faces with. The immediate remedy is to modify the adjacent faces (as illustrated in) in a way that introduces a new edge, not ideal when we are trying to minimize edge length.\\

In this chapter we will rely on what could broadly be described as perturbations of the faces. We found that this strategy was also implemented by Besicovitch and Eggleston \cite{Besicovitch} in tackling a polyhedral edge length minimizing problem where the constraint was not volume, but rather that the polyhedron must be able to contain a unit ball. \\   

To make the problem slightly more tractable we'll make a cosmetic change. We'll seek to minimize the cube of the edge length and normalize our functional w.r.t. volume so any polyhedron is admissible. Letting $e(P)$ be the edge length of a polyhedron $P$ and $v(P)$ the volume we are trying to minimize the functional $m(P)=e(P)^3/v(P)$. Notice $m$, in this text referred to as the Melzak ratio, is scale invariant. Throughout the text we'll talk about "a Melzak problem" as one where we want to minimize the Melzak ratio among some class of polyhedral objects, whether it be all polyhedrons, generalized polyhedrons, polyhedrons with restrictions on the number of faces, convex polyhedrons, or any other number of geometric restrictions.\\

Taking a perturbation of some polyhedron $P_0$, $\mathcal{P}=\{P_t\}_{t \in \left[0,\epsilon \right]}$ we can define functions $E_{\mathcal{P}}(t)$, $V_{\mathcal{P}}(t)$, $M_{\mathcal{P}}(t)$ to be $e(P_t),v(P_t),m(P_t)$ respectively. Omitting writing $\mathcal{P}$ and assuming both $E$ and $V$ are differentiable note:

$$M'(t)=\frac{3E(t)^2}{V(t)}E'(t)-\frac{E(t)^3}{V(t)^2}V'(t)$$

A minimizer $P_0$, in any given class, cannot admit a perturbation with $M'(0)<0$. Our goal will be to extract local criteria on minimizing candidates by choice of $\mathcal{P}$. Our strongest results will be for convex polyhedrons so the following sequence will be helpful.

\begin{claim}
\label{MinimizingSequence}
There exists a sequence of convex polyhedrons $P_n$ that:\\
i) Minimizes the Melzak ratio among convex polyhedrons\\
ii) If $P_n \neq P_{n+1}$ (up to scaling) then $P_n$ has fewer faces\\
iii) $P_n$ minimizes Melzak ratio among convex polyhedrons with equally many faces.
\end{claim}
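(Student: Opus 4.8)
The plan is to stratify the problem by the number of faces and to build the sequence out of per-stratum minimizers, whose existence carries the real weight. For $f\ge 4$ let $I_f$ denote the infimum of $m$ over convex polyhedrons with exactly $f$ faces and $J_f$ the infimum over convex polyhedrons with at most $f$ faces, so that $J_f$ is non-increasing with $J_f\downarrow I$, where $I:=\inf\{m(P):P\ \text{convex}\}$ is the value any sequence satisfying (i) must approach. Scale invariance of $m$ lets me normalize $v(P)=1$ throughout.

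First I would establish the key compactness lemma: for each $f$ the infimum $J_f$ is attained by a genuine convex polyhedron. Take polyhedrons $P_k$ with $v(P_k)=1$ and $m(P_k)=e(P_k)^3\to J_f$; the edge lengths are then bounded, and since the diameter of a convex polytope is at most its total edge length (any two vertices are joined along the $1$-skeleton), after translation all $P_k$ sit in a fixed ball. By Euler's relation a convex polyhedron with at most $f$ faces has at most $3f-6$ edges and $2f-4$ vertices, so only finitely many combinatorial types occur; combining this with Blaschke selection I may pass to a subsequence of constant combinatorial type whose vertices converge and whose Hausdorff limit $K$ is therefore a convex polyhedron with at most $f$ faces. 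Continuity of volume under Hausdorff convergence forces $v(K)=1$, so $K$ is genuinely three-dimensional. The step I expect to be the main obstacle is the lower semicontinuity of edge length, $e(K)\le\liminf e(P_k)$: in the limit edges may collapse to points or merge into collinear segments, and one must check that neither can manufacture extra length. Because the combinatorial type is fixed along the subsequence every edge length converges, the edges of $K$ are precisely the limiting edges of positive length separating distinct faces, and collapsing edges only lose length while merging preserves it, so $e(K)\le\lim e(P_k)=J_f$. As $K$ has at most $f$ faces we also have $m(K)\ge J_f$, whence $m(K)=J_f$ and $J_f$ is attained.

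The minimizer $P^{*}_{f}$ just produced has some number of faces $c_f\le f$, and it automatically witnesses (iii) for that count. Indeed, the class of convex polyhedrons with exactly $c_f$ faces sits inside the class with at most $f$ faces, so $J_f\le I_{c_f}$; but $P^{*}_{f}$ has exactly $c_f$ faces, giving $I_{c_f}\le m(P^{*}_{f})=J_f$. Hence $I_{c_f}=m(P^{*}_{f})$, and $P^{*}_{f}$ minimizes $m$ among convex polyhedrons with exactly $c_f$ faces.

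It remains to assemble the sequence. Let $\mathcal V=\{c_f:f\ge 4\}$ be the set of face counts realized by these minimizers; for each $c\in\mathcal V$ fix a representative minimizer $R_c$ of the exactly-$c$-faces class, so $m(R_c)=I_c$, and observe $\inf_{c\in\mathcal V}I_c=\lim_f J_f=I$. If this infimum is attained, say $I_{c^{*}}=I$, then $R_{c^{*}}$ is a genuine global minimizer and the constant sequence $P_n\equiv R_{c^{*}}$ satisfies (i) and (iii), with (ii) vacuous. Otherwise $I_c>I$ for every $c\in\mathcal V$; choosing $c^{(k)}\in\mathcal V$ with $I_{c^{(k)}}\to I$, these face counts must be unbounded, since a bounded collection would realize only finitely many values $I_c$ whose limit $I$ would then be attained. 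Passing to a strictly increasing subsequence $c^{(1)}<c^{(2)}<\cdots$ I set $P_k:=R_{c^{(k)}}$; then $m(P_k)=I_{c^{(k)}}\to I$ gives (i), consecutive terms differ with strictly increasing face count giving (ii), and (iii) holds for each term by the previous step. Either way the constructed sequence meets all three requirements.
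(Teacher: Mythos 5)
Your proposal is correct, but it takes a genuinely different route from the paper, and the comparison is instructive. The paper's proof is a greedy induction: it starts from the regular tetrahedron, and to obtain $P_{n+1}$ it picks the smallest face count $k$ for which some convex polyhedron beats $P_n$, then invokes compactness of ``unit volume polyhedrons with $k$ faces and bounded edge length'' to take a minimizer of that class. Both proofs thus rest on the same pillar --- attainment of the infimum on face-count-bounded classes --- but your stratification is by \emph{at most} $f$ faces rather than \emph{exactly} $k$, and that is a real improvement: the exactly-$k$ class is not closed under Hausdorff limits (faces can degenerate away), so the compactness the paper asserts is not literally true as stated, and its argument is only rescued implicitly by the greedy minimality of $k$ (a limit with fewer faces and no larger ratio would contradict the choice of $k$). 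Your at-most-$f$ classes are closed, Blaschke selection applies cleanly, and the identity $I_{c_f}=J_f$ then recovers the exact-face-count minimality that clause (iii) demands. You also isolate the lower semicontinuity of edge length, which both proofs need and the paper never mentions; your sketch (fixed combinatorial type, collapsing edges lose length, merging preserves it) is the right idea, though a complete argument should check that every edge of the limit polyhedron is covered by limits of edges of the approximants, each such limit segment lying within a single edge of the limit, so no length is created. What the paper's greedy scheme buys in exchange is economy: properties (i)--(iii) fall out of the induction with no case analysis, and the ratios are monotone by construction, whereas your assembly needs the dichotomy between the global infimum being attained or not, with unboundedness of the face counts doing the work in the second case. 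Both assemblies are valid; yours is the more rigorous of the two.
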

\begin{proof}
First let $P_1$ be the regular tetrahedron. Known to Melzak \cite{melzak_tetrahedrons} to minimize the Melzak ratio among all tetrahedrons. To get $P_{n+1}$ we first choose the polyhedron, $P$, with the fewest sides, $k$, so $P$ lower Melzak ratio than $P_n$. Since the space of unit volume polyhedrons with $k$ faces and some bounded edge length is compact we let $P_{n+1}$ be the polyhedron among them that minimizes the Melzak ratio.    
\end{proof}

\subsection{Summary of Results}

The ambitious goals, to find or demonstrate the existence of a minimizer to some formulation of Melzak's problem is not achieved. However we do find local criteria for minimizers. The most approachable formulation is for the case of convex polyhedrons. For the elements of \ref{MinimizingSequence} we are able to demonstrate several criteria. The most complete are in \ref{ExposedFace}, \ref{FiniteTriangles}, and \ref{QuadProperty}. Respectively we are able to say that all vertices must be of degree 3, we are able to bound the number of triangular faces, and we are able to describe necessary behavior for small quadrilateral faces.\\

While we don't get any global data for the Melzak problem admitting all polyhedrons we do describe local data, the results described above are generalized as they can be to polyhedrons that are not necessarily convex. On top of generalizing the local criteria of convex polyhedrons to the non-convex we also show that the vertex degree on potential minimizers is bounded by the curvature at the vertex for locally convex vertices.

\section{Vertex Degree Bounds}

\begin{definition}
A vertex $H$ of $P$ is exposed it is contained in an open neighborhood $U_H$ s.t. $U_H\cap P$ is convex.
\end{definition}

\begin{definition}
A face $F$ of $P$ is exposed it is contained in an open neighborhood $U_F$ s.t. $U_F\cap P$ is convex. Equivalently a face $F$ of $P$ is exposed if all its vertices are exposed.
\end{definition}

\subsection{Exposed Faces}
\begin{theorem}
\label{ExposedFace}
If $P$ has an exposed face $F$ with a vertex of degree more than 3 we can perturb $P$ to decrease the Melzak ratio.
\end{theorem}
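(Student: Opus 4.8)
The plan is to perturb $P$ by translating the supporting plane of $F$ rigidly along its unit outward normal $\mathbf{n}$, parametrized by a signed displacement $t$, with $t>0$ moving the plane outward and $t<0$ inward. Since $F$ is exposed, $P$ is convex in a neighborhood of $F$, so for $|t|$ small the translated plane still meets the adjacent faces transversally and $P_t$ is a genuine polyhedron (still convex near $F$); only the faces meeting $F$ and the configurations at the vertices of $F$ change. The volume responds smoothly, with $V'(0)=\operatorname{area}(F)=:A>0$. The edge length is the delicate term. Label the faces around the bad vertex $H$ cyclically $F_1=F,F_2,\dots,F_d$ and the edges $e_1,\dots,e_d$, with $F$ lying between $e_1$ and $e_2$. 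Because $d\ge 4$, the plane of $F$ is one of $\ge 4$ planes concurrent at $H$, a non-generic coincidence; translating $F$ in either direction destroys the concurrency and forces $H$ to resolve. Hence $E(t)$ is only piecewise smooth and may have a corner at $t=0$; write $\delta:=E'(0^-)-E'(0^+)$ for the size of that corner.

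Next I would reduce the theorem to the single inequality $\delta>0$. Using the formula $M'(t)=\tfrac{3E(t)^2}{V(t)}E'(t)-\tfrac{E(t)^3}{V(t)^2}V'(t)$ together with $V'(0)=A$, a configuration from which $M$ cannot be decreased in either direction must satisfy $M'(0^+)\ge 0$ and $M'(0^-)\le 0$, i.e.
\[
E'(0^-)\ \le\ \frac{A\,E(0)}{3V(0)}\ \le\ E'(0^+),
\]
which forces $\delta\le 0$. Consequently, if the corner is strictly concave, $\delta>0$, these two inequalities cannot both hold: either $E'(0^+)<\tfrac{A\,E(0)}{3V(0)}$, whence $M'(0^+)<0$ and the outward translation decreases the Melzak ratio, or $E'(0^-)>\tfrac{A\,E(0)}{3V(0)}$, whence $M'(0^-)>0$ and the inward translation decreases it. Either way we win, and the argument is insensitive to the global quantities $E(0),V(0),A$.

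The crux is therefore the sign of $\delta$, a purely local computation at $H$. Let $\bu_1,\dots,\bu_d$ be the unit edge directions at $H$. Under the outward resolution $F$ peels off $H$: the two neighbors of $F$ meet along a new ridge running from the unmoved vertex $H_0=\bigcap_{i=2}^d F_i$ to a fresh degree-three vertex, while $e_3,\dots,e_d$ retain the fixed endpoint $H_0$. Under the inward resolution $F$ instead truncates the corner, creating a new small face and new edges along which $F$ meets the formerly non-adjacent faces. Using the first-variation identity $\tfrac{d}{dt}|VW|=\langle\dot V,(V-W)/|V-W|\rangle$, every edge \emph{not} incident to $H$ moves identically in the two cases and cancels from $\delta$, leaving an explicit expression in the $\bu_i$, the ridge directions, and $\mathbf{n}$. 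The main obstacle is to show this local expression is strictly positive in general, and I expect this is precisely where exposedness is indispensable: local convexity forces the lateral edges at $H$ to lengthen under the ``spread-out'' resolution quickly enough to beat the oppositely signed growth of the new ridge. I would settle the base case $d=4$ by the two-diagonal model, where $H$ resolves along one of two transverse ridges and a direct evaluation gives $\delta>0$ (for instance $\delta=2\sqrt{3}-2$ at a square-pyramid apex), and then treat $d>4$ by peeling, since the outward move lowers the degree of $H$ by exactly one.

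Finally I would dispose of the degenerate sub-cases (edges of $F$ parallel to $\mathbf{n}$, or adjacent faces whose line of intersection is parallel to the plane of $F$), verify that $P_t$ remains admissible for small $|t|$---translating a supporting plane of a convex body inward or outward preserves convexity, so this is automatic in the convex setting of Claim \ref{MinimizingSequence}---and confirm that the resulting decrease in $M$ is strict.
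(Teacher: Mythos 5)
Your framework coincides exactly with the paper's: the paper also translates the supporting plane of $F$ outward (its perturbation $\mathcal{P}$) and inward (its perturbation $\mathcal{Q}$), observes that the two volume derivatives are equal and opposite, and concludes that a minimizer forces $E'_{\mathcal{P}}(0)+E'_{\mathcal{Q}}(0)\ge 0$, which in your signed-parameter language is precisely $\delta\le 0$. So your reduction of the theorem to the single strict inequality $\delta>0$ at a vertex of degree $d\ge 4$ is correct, and it is the same reduction the paper performs; your description of the two resolutions (outward: old vertex survives with degree $d-1$ plus a fresh degree-3 vertex on a new ridge; inward: a truncation with $d-2$ new vertices) is also accurate.

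The gap is that $\delta>0$ is the entire mathematical content of the theorem, and you do not prove it. You explicitly defer it (``the main obstacle is to show this local expression is strictly positive''), verify it only for a single configuration (a square-pyramid apex), and propose an induction on $d$ by ``peeling'' whose inductive step is never supplied: you note that the outward move lowers the degree by one, but give no argument that the corresponding increment in $\delta$ has a sign. This missing step is exactly what the paper's ``geometric trick'' provides. Writing $v_{H,1},\dots,v_{H,k-2}$ for the velocities of the vertices created by the inward cut, the paper compares the inward expression $E'_{\mathcal{Q},H}(0)$ to the outward one $-E'_{\mathcal{P},H}(0)$ by removing one halfspace of the truncation at a time; each removal merges two of the new vertices into one and changes the edge-length derivative by $a_1+a_2+b_1+b_2-a_3-b_3$, which is strictly positive by the triangle inequality, and iterating down to a single vertex lands exactly on the peeled (outward) configuration. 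The accumulated positive terms give $\delta_H>0$ for an arbitrary exposed corner, with no case analysis in $d$ and no need for a base-case computation. In short: the triangle-inequality iteration is the missing inductive step of your plan, and without it (or some substitute valid for every exposed degree-$d$ corner, not just the symmetric example) the proof is incomplete. A smaller inaccuracy: your claim that every edge not incident to $H$ contributes identically on both sides of $t=0$ holds only when $H$ is the unique vertex of $F$ of degree greater than 3; in general $\delta$ is a sum of corner terms over all vertices of $F$ (degree-3 corners contributing zero), so one needs positivity at every high-degree corner, which is what the paper's local argument delivers.
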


\begin{proof}
Our strategy is to create two perturbations and deduce the result from there.
Our first perturbation, $\mathcal{P}$, could be qualitatively described  as sliding an exposed face in the outward direction. More precisely, for an exposed face $F$ we can pick $U_F$ so $U\cap P_0$ is an intersection of closed (exc. one open) half spaces. By translating the halfspace that corresponds to the face $F$ in the outward normal direction by $t$ and then replacing $U_F\cap P$ with this new intersection we get $P_t$, taking care to have $t$ sufficiently small. We'll call corresponding face $F_t$.\\

Finding $M'(0)$ calls for two observations. First, for each vertex $H$ of $F_0$ let $H_t$ be the corresponding vertex of $F_t$. When $t>0$ we immediately see that $H_t$ is of degree 3, with unit vectors $u_{1,t,H}$ and $u_{2,t,H}$ along the edges of $F_t$ and vector $v_{H}$ s.t. the vector between $H_t$ and $H_s$ is $(t-s)v_{H}$. We can then attribute $E_{\mathcal{P}}'(t)$ to the vertices where it is contributed.
$$ E'_{\mathcal{P}}(t)=\sum_H E'_{\mathcal{P},H}(t)= \sum_H <v_{H},u_{1,H,t}+u_{2,H,t}>+||v_H||$$.

Next, taking $t$ to $0$ we see that the change in volume approaches a prism with base $F_0$ and height $t$, so $V'(0)$ is equal to the area of $F_0$. Taken together
$$M'_{\mathcal{P}}(0)=\frac{3E(0)^2}{V(0)}\left(\sum_H ||v_H||-<v_{H},u_{1,H,0}+u_{2,H,0}>\right)-\frac{E(0)^3}{V(0)^2}A(F_0) $$

Now we construct a second perturbation $\mathcal{Q}$, where instead of translating the halfspace corresponding to $F_0$ into the outward facing direction we translate it into the inward facing direction.
Here we are not guaranteed a one to one correspondence between the vertices of $F_0$ and the vertices of $F_t$. In fact, if $H$ is degree $k$ then $F_t$ has $k-2$ corresponding vertices. Let $H_{\cdot,t}$ be these corresponding vertices, with $v_{H,\cdot}$ the vectors corresponding to $-v_{H}$ in the earlier perturbation. As in the earlier perturbation we find $V'_{\mathcal{Q}}(0)=-A(F_0)$.\\

If all vertices of degree 3 then we get $M'\mathcal{P}(0)=-M'\mathcal{Q}(0)$, following the calculation of $M'\mathcal{P}(0)$. However, let one $H$ have degree $k$. Then we can attribute the following rate of change in edge length to $H$ and evaluate at $t=0$:
$$E'_{\mathcal{Q},H}(0)=<v_{H_1},u_{1,H,0}>+<v_{H_{k-2}},u_{2,H,0}>+\sum_{n=1}^{k-2}-||v_{H_n}||+\sum_{n=1}^{k-3}||v_{H_n}-v_{H_{n+1}}||$$

Now we want to show  $E'_{\mathcal{Q},H}(0)<-E'_{\mathcal{P},H}(0)$. To demonstrate this we'll use a small geometric trick, illustrated in figure \ref{Geotrick}. Note labels $a,b$ denote edge lengths, $a$ the dotted edges and $b$ solid ones, and are new:
\begin{center}
\includegraphics[scale=1.25]{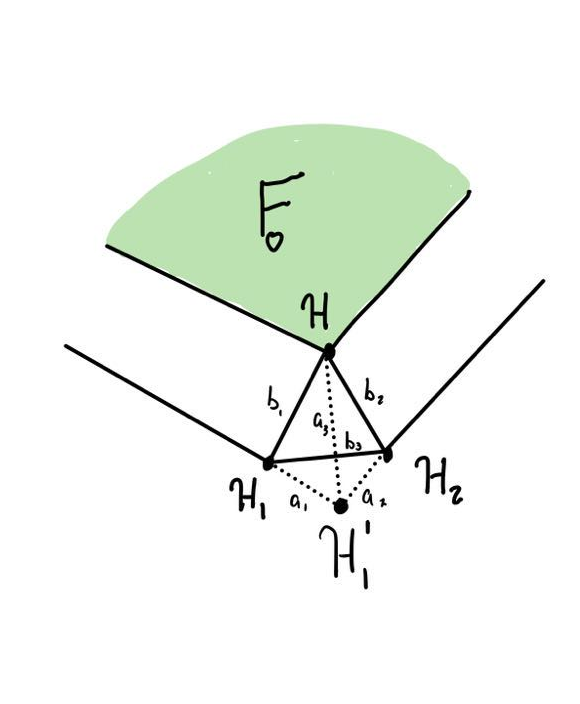}    \label{Geotrick}
\end{center}

Again considering $U_F$ to be an intersection of halfspaces we observe that removing the halfspace along the face $H,H_{1,t},H_{2,t}$ induces a new vertex labeled $H'_{1,t}$. Reindexing $H_{n,t}=H'_{n-1,t}$ for all $n>3$ we get:

$$<v_{H'_1},u_{1,H,0}>+<v_{H'_{k-2}},u_{2,H,0}>+\sum_{n=1}^{k-3}-||v_{H'_n}||+\sum_{n=1}^{k-4}||v_{H'_n}-v_{H'_{n+1}}||=$$ $$<v_{H,1},u_{1,H,0}>+<v_{H,k-2},u_{2,H,0}>+\sum_{n=1}^{k-2}-||v_{H,n}||+\sum_{n=1}^{k-3}||v_{H,n}-v_{H,n+1}||+a_1+a_2+b_1+b_2-a_3-b_3$$
The triangle inequality then tells us $a_1+a_2+b_1+b_2-a_3-b_3>0$. Iterating this process gives us a single vertex $H'_t$, namely $H_t-tv_{H}$. All taken together we get:

$$-E'_{\mathcal{P},H}(0) = <v_{H'_1},u_{1,H,0}>+<v_{H'_{k-2}},u_{2,H,0}> - ||v_{H'}|| = $$ $$<v_{H,1},u_{1,H,0}>+<v_{H,k-2},u_{2,H,0}>+\sum_{n=1}^{k-2}-||v_{H,n}||+\sum_{n=1}^{k-3}||v_{H,n}-v_{H,n+1}||+\sum a_{\cdot} + \sum b_{\cdot}>$$ $$< v_{H,1},u_{1,H,0}>+<v_{H,k-2},u_{2,H,0}>+\sum_{n=1}^{k-2}-||v_{H,n}||+\sum_{n=1}^{k-3}||v_{H,n}-v_{H,n+1}|| = E'_{\mathcal{Q},H}(0)$$ Giving our desired outcome $M'_{\mathcal{Q}}(0)<-M'_{\mathcal{P}}(0)$.\\

Now, either $M'_{\mathcal{P}}(0)< 0$ or $M'_{\mathcal{Q}}(0)< 0$. Interestingly our perturbation does not change the number of faces on $P$ though it may change the number of vertices on $F$. 
\end{proof}
While being able to prescribe this local condition is certainly nice in the Melzak problem for polyhedrons, it is most applicable in the convex Melzak problem.
\begin{corollary}
All elements of the sequence \ref{MinimizingSequence} have vertices only of degree 3.
\end{corollary}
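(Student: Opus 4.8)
The plan is to derive the corollary as a one-line contradiction from Theorem~\ref{ExposedFace} together with extremality clause (iii) of Claim~\ref{MinimizingSequence}. Before that I would record two elementary structural facts. First, every vertex of a $3$-dimensional polyhedron is incident to at least three edges, so the assertion ``vertices only of degree $3$'' is equivalent to ``no vertex has degree exceeding $3$.'' Second, since each $P_n$ is convex, it is a global intersection of closed half-spaces, and therefore \emph{every} face of $P_n$ is exposed: one may take the neighborhood $U_F$ in the definition to be all of $\R^3$, and $U_F\cap P_n=P_n$ is convex. This is the hinge that lets me feed any face of $P_n$ into Theorem~\ref{ExposedFace}.

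Next I would argue by contradiction. Suppose some $P_n$ has a vertex $H$ of degree greater than $3$. Choose any face $F$ incident to $H$; by the previous remark $F$ is exposed and has a vertex (namely $H$) of degree more than $3$. Theorem~\ref{ExposedFace} then supplies a perturbation $\{P_t\}$ of $P_n$ with either $M'_{\mathcal P}(0)<0$ or $M'_{\mathcal Q}(0)<0$, hence a convex competitor $P_t$ with $m(P_t)<m(P_n)$ for small $t>0$.

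The substantive bookkeeping is to confirm that $P_t$ stays inside the comparison class that clause (iii) governs. Here I would invoke the two features already isolated in the proof of Theorem~\ref{ExposedFace}: the perturbation merely translates the supporting half-space of a single face, and, as noted there, it ``does not change the number of faces.'' Because $P_n$ is convex and presented as an intersection of half-spaces, translating one of those half-spaces---inward or outward---keeps the body an intersection of half-spaces, hence convex, for $t$ small. Thus $P_t$ is a convex polyhedron with exactly as many faces as $P_n$ but with strictly smaller Melzak ratio. This contradicts clause (iii) of Claim~\ref{MinimizingSequence}, that $P_n$ minimizes $m$ among convex polyhedrons with equally many faces. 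Therefore no $P_n$ carries a vertex of degree exceeding $3$, and every vertex has degree exactly $3$.

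The step I expect to be the main obstacle is precisely this convexity-and-face-count verification, specifically for the inward perturbation $\mathcal Q$. For the outward slide convexity is automatic, but sliding the half-space inward could in principle delete a face, split the body, or collapse $F_t$; one must pin down that for $t$ below the distance at which $F_t$ would degenerate none of these occurs, so that the number of faces is genuinely preserved and $P_t$ remains an admissible convex competitor. These are exactly the ``sufficiently small $t$'' caveats from Theorem~\ref{ExposedFace}, and making them quantitative is where the real care lies.
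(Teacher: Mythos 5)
Your proof is correct and follows exactly the route the paper intends: every face of a convex polyhedron is exposed, so Theorem~\ref{ExposedFace} yields a strictly better convex competitor with the same number of faces, contradicting clause~(iii) of Claim~\ref{MinimizingSequence}. The paper leaves this corollary without an explicit proof, and your write-up (including the care about convexity and face-count preservation for small $t$) is precisely the implicit argument, so no further comparison is needed.
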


In fact, it was too strong a condition to ask that the face be exposed. We can weaken our conditions and generalize \ref{ExposedFace} slightly.
\begin{theorem}
\label{SemiExposedFace}
If $P$ has a face $F$ all but $2$ adjacent vertices exposed and with an exposed vertex of degree more than 3 we can perturb $P$ to decrease the Melzak ratio.
\end{theorem}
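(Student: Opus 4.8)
The plan is to run the same pair of perturbations as in Theorem~\ref{ExposedFace}: a perturbation $\mathcal{P}$ that translates the supporting halfspace of $F$ outward and a perturbation $\mathcal{Q}$ that translates it inward. The essential observation is that the vertex decomposition $E'_{\mathcal{P}}(0)=\sum_H E'_{\mathcal{P},H}(0)$ and the volume derivatives $V'_{\mathcal{P}}(0)=A(F_0)=-V'_{\mathcal{Q}}(0)$ are entirely local and never used global convexity of $F$; only the per-vertex edge contributions did. Consequently, for every exposed vertex $H$ of $F$ the contributions are exactly those computed in Theorem~\ref{ExposedFace}: at an exposed vertex of degree $3$ we have $-E'_{\mathcal{P},H}(0)=E'_{\mathcal{Q},H}(0)$, while at the hypothesised exposed vertex of degree $k>3$ the triangle-inequality trick of Figure~\ref{Geotrick} yields the strict inequality $E'_{\mathcal{Q},H}(0)<-E'_{\mathcal{P},H}(0)$. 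Since the volume derivatives cancel in the sum, it therefore suffices to control the two non-exposed vertices so that their combined contribution to $E'_{\mathcal{P}}(0)+E'_{\mathcal{Q}}(0)$ is nonpositive; the strict term from the degree-$k$ vertex then forces $M'_{\mathcal{Q}}(0)<-M'_{\mathcal{P}}(0)$, and one of the two perturbations decreases the Melzak ratio.

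First I would localise the failure of convexity. Label the vertices of $F$ cyclically as $H_1,\dots,H_n$ with $H_1,H_2$ the two non-exposed (hence adjacent) vertices and $H_3,\dots,H_n$ exposed, the degree-$k$ vertex being one of the latter. The only edge of $F$ joining two non-exposed vertices is $e=H_1H_2$; every other boundary edge of $F$ meets an exposed vertex and is handled verbatim as in Theorem~\ref{ExposedFace}. Because $e$ lies in the plane of $F$ and is translated rigidly with that plane under either $\mathcal{P}$ or $\mathcal{Q}$, the next step is to verify that for $t$ small the translated intersection is still a genuine polyhedron near $e$: the non-convexity there is carried along a single edge of the moving plane and neither creates nor resolves incidences transverse to it, so $P_t$ and its inward counterpart are well defined.

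The main obstacle is the sign of the two non-exposed contributions. At a degree-$3$ vertex the outward and inward slides are exact reverses, which is why their contributions cancel; at an exposed vertex of degree $k>3$ they fail to be reverses in precisely the favourable direction. At a non-exposed vertex this reversibility can fail in the \emph{unfavourable} direction, so a priori $E'_{\mathcal{P},H_i}(0)+E'_{\mathcal{Q},H_i}(0)$ need not be nonpositive. The crux is that, with the two non-exposed vertices adjacent, their entire contribution is organised around the single shared edge $e$: I would show that the local edge-length functionals at $H_1$ and $H_2$ are, to first order in $t$, the outward and inward images of one cross-section hinged along $e$, so that the pair may be treated as a single reversible unit contributing $E'_{\mathcal{P}}(0)+E'_{\mathcal{Q}}(0)=0$ along $e$, exactly as a degree-$3$ vertex does. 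Establishing this reversibility for the adjacent pair—and verifying that no transverse edge is gained or lost asymmetrically between $\mathcal{P}$ and $\mathcal{Q}$—is the delicate point, and it is exactly the adjacency hypothesis (at most two, consecutive) that confines the difficulty to the single moving edge $e$. Once it is in hand the conclusion is identical to that of Theorem~\ref{ExposedFace}, and the consequence that the elements of the sequence~\ref{MinimizingSequence} carry only degree-$3$ vertices is unaffected.
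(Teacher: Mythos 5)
Your proposal follows a genuinely different route from the paper --- you keep the \emph{translation} perturbations of Theorem~\ref{ExposedFace} and try to tame the two non-exposed vertices directly --- and that route has a real gap. Everything rests on the claim that the adjacent non-exposed pair $H_1,H_2$ acts as ``a single reversible unit'' contributing zero to $E'_{\mathcal{P}}(0)+E'_{\mathcal{Q}}(0)$; you flag this as the delicate point but never prove it, and it is not clear it can be proved. At a non-exposed vertex the local cone is non-convex, so two things break. First, the construction of \ref{ExposedFace} is not even well defined there: it requires choosing $U_F$ so that $U_F\cap P$ is an intersection of halfspaces, which is exactly what fails near $H_1,H_2$. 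Second, the combinatorial dichotomy that drives the whole computation (outward translation turns every vertex of $F_t$ into a degree-$3$ vertex; inward translation splits a degree-$k$ vertex into $k-2$ vertices, allowing the triangle-inequality trick of Figure~\ref{Geotrick}) is a consequence of local convexity and fails at a non-convex vertex. Without it you have no control on either the sign or the magnitude of the contributions at $H_1,H_2$: they could add a positive amount to $E'_{\mathcal{P}}(0)+E'_{\mathcal{Q}}(0)$ large enough to swamp the one strict inequality you get from the exposed vertex of degree $k>3$, and your argument gives no bound ruling that out.

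The paper sidesteps the difficulty instead of confronting it: rather than translating the supporting plane of $F$, it \emph{rotates} that plane about the edge $H_1H_2$ joining the two not-necessarily-exposed vertices ($\mathcal{P}$ rotating outward, $\mathcal{Q}$ inward). Since $H_1$ and $H_2$ lie on the rotation axis they do not move at all, so their contribution is zero by construction --- no cancellation lemma is needed --- while every other vertex of $F$ is exposed and the per-vertex computation of \ref{ExposedFace} applies verbatim. The only technical change is that the vertex velocities must be taken as one-sided limits,
\begin{equation}
v_H=\lim_{t\rightarrow 0^{+}} \frac{H_t-H}{t},
\nonumber
\end{equation}
which does not affect the argument. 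If you replace your translations by this rotation, your entire discussion of the moving edge $e$ becomes unnecessary and the rest of your outline (volume derivatives cancelling, the strict term from the degree-$k$ vertex forcing $M'_{\mathcal{Q}}(0)<-M'_{\mathcal{P}}(0)$, hence one of the two perturbations decreasing the Melzak ratio) goes through as stated.
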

\begin{proof}
The idea of the proof is the same as in \ref{ExposedFace}, however our perturbation rotates the face about the edge between the two vertices which are not necessarily exposed, with $\mathcal{P}$ rotating the face outward and $\mathcal{Q}$ rotating the face inward. There is only one small technical difference, which does not alter the argument proving \ref{ExposedFace} at all. 
$$v_H=\lim_{t\rightarrow 0^+} \frac{H_t-H}{t}$$
\end{proof}

\subsection{Exposed Vertices}

Should we limit ourselves to knowing a vertex is exposed we can no longer apply any perturbations on the faces of the polyhedron. What we will instead do could be qualitatively described as shaving off some infinitesimally small piece of the corner. Before starting we'll introduce some tools and observations from discrete differential geometry.

\begin{definition}
    If $H$ is a vertex of $P$ then the Gauss image at the vertex $H$ is given by sending each face to their respective outward facing normal vector in $S^2$ and drawing geodesics between points corresponding to adjacent faces.
\end{definition}
 \begin{remark}
     If $P$ is convex then taking the Gauss image of each vertex of $P$ gives us a dual-graph of $P$ on $S^2$.
 \end{remark}
   \begin{remark}
       The discretization of Gaussian curvature is the angle deficit at a vertex, that is $2\pi - \sum \alpha_i$ where $\alpha$ are the angles of adjacent faces at the vertex. This corresponds to the algebraic area of the Gauss image at the vertex, a classic result in discrete differential geometry in the convex case \cite{Alexandrov2005} that also holds in the general case \cite{AngleDeficitAll}.
   \end{remark}
   \begin{remark}
      The length of the geodesics gives us $\pi$ minus the dihedral angle between the (oriented) faces. The distance from a point to a geodesic thus corresponds similarly to the angle between the corresponding edge and face.
   \end{remark}

\begin{definition}
    If $H$ is exposed then $g(H)$ is a convex spherical polyhedron. Taking the inscribed circle in $g(H)$ we can determine the incenter, $c_H \in S^2$, and inradius, $\theta_H \in (0,2\pi)$ of $g(H)$.
\end{definition}

\begin{theorem}\label{ExposedVertex}
    If $P$ has an exposed vertex $H$ such that $deg(H)>\frac{2\pi}{tan(\theta)}$ then $P$ can be perturbed to have lower Melzak ratio and one more face.
\end{theorem}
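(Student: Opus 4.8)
The plan is to realize the informal ``shave off the corner'' perturbation concretely and then read off the first-order data. Place $H$ at the origin and let $c_H \in S^2$ be the incenter of the Gauss image $g(H)$, which is a genuine convex spherical polygon precisely because $H$ is exposed. Define $\mathcal{P} = \{P_t\}$ by slicing $P$ near $H$ with the half-space $\{x : \langle x, c_H\rangle \le t\}$; for small $t>0$ the exposedness of $H$ guarantees that $U_H \cap P$ is a convex cone, so the slice replaces the single vertex $H$ by one new planar face $F_t$ bounded by the $k = \deg(H)$ truncated edges. Each truncation point then has degree three (the new face together with the two original faces meeting along that edge), $H$ itself disappears, and a short Euler count confirms that the net effect is exactly one additional face, as claimed in \ref{ExposedVertex}.

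The first reduction is that volume drops out to first order. The excised region is a pyramid with apex $H$ and base $F_t$; its base area is $O(t^2)$ and its height is $O(t)$, so $V_{\mathcal P}(t) = V(0) - O(t^3)$ and hence $V'_{\mathcal P}(0) = 0$. From the formula for $M'(t)$ in the introduction this forces $M'_{\mathcal P}(0) = \tfrac{3E(0)^2}{V(0)}\,E'_{\mathcal P}(0)$, so it suffices to prove $E'_{\mathcal P}(0) < 0$. The edge length changes in two competing ways. First, each of the $k$ edges at $H$ is shortened: using the incenter property together with the dictionary between $g(H)$ and edge--face angles (the third remark above), the fact that $c_H$ is at common spherical distance $\theta$ from every edge--geodesic of $g(H)$ means that every edge $e_i$ at $H$ meets the cutting plane at the same angle $\theta$. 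Writing each unit edge direction with component $\sin\theta$ along $c_H$, the cut point on $e_i$ sits at distance $t/\sin\theta$ from $H$, so the shortening contributes $-k/\sin\theta$ to $E'_{\mathcal P}(0)$.

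Second, I must account for the perimeter of the new face $F_t$. In coordinates with $c_H$ vertical, the cut points lie in the plane $\{z=t\}$ on a circle of radius $t\cot\theta$ (the horizontal part of each unit edge direction has length $\cos\theta$, scaled by $t/\sin\theta$), with $F_t$ the polygon inscribed in that circle at the azimuthal angles of the edges. Its perimeter is therefore at most the circumference $2\pi t\cot\theta$, contributing at most $2\pi\cot\theta$ to $E'_{\mathcal P}(0)$. Combining the two contributions gives
\[
E'_{\mathcal P}(0) \;\le\; 2\pi\cot\theta - \frac{k}{\sin\theta} \;=\; \frac{1}{\sin\theta}\bigl(2\pi\cos\theta - k\bigr),
\]
and the hypothesis $\deg(H)=k > 2\pi/\tan\theta = 2\pi\cot\theta \ge 2\pi\cos\theta$ makes the right-hand side negative, so $M'_{\mathcal P}(0)<0$ and the perturbed polyhedron has both lower Melzak ratio and one more face.

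I expect the main obstacle to be the geometric bookkeeping of the new face rather than the final estimate: one must check that for all sufficiently small $t$ the slice genuinely meets each of the $k$ edges and nothing else, that the removed piece is a single pyramid so the $O(t^3)$ volume claim is legitimate, and above all that the incenter dictionary is applied with the correct orientation, so that ``spherical distance $\theta$ to each edge--geodesic'' really translates into ``common angle $\theta$ with the cutting plane.'' The inscribed-polygon-versus-circumference step is deliberately lossy, which is why the stated threshold $2\pi/\tan\theta$ is sufficient but presumably not sharp; replacing the circumference by the exact chord sum $2t\cot\theta\sum_i \sin(\Delta\phi_i/2)$ would sharpen the constant toward $2\pi\cos\theta$ but is not needed for the statement.
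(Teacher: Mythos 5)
Your construction is exactly the paper's: truncate the corner with a plane normal to the incenter direction $c_H$ at depth $t$, observe $V'(0)=0$ because the excised pyramid has volume $O(t^3)$, and then compare the perimeter of the new face against the total shortening of the $k=\deg(H)$ truncated edges. But one step in your accounting is false: you assert that $c_H$ is at \emph{common} spherical distance $\theta$ from every edge-geodesic of $g(H)$, hence that every edge meets the cutting plane at the same angle $\theta$. The inradius is only the \emph{minimum} of the distances $\theta_i$ from $c_H$ to the edge-geodesics; the incircle is tangent to at least two of them, but in general $\theta_i > \theta_H$ for the rest (think of a non-square rectangle, whose incircle touches only the two long sides; the Gauss image of a generic degree-$4$ vertex behaves the same way). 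Consequently the cut point on $e_i$ lies at distance $t/\sin\theta_i$ from $H$, not $t/\sin\theta_H$; your claimed shortening rate $k/\sin\theta$ overestimates the true rate $\sum_i 1/\sin\theta_i$; and the cut points lie \emph{inside} the disk of radius $t\cot\theta_H$ rather than on that circle. Since you need a lower bound on the shortening to conclude $E'_{\mathcal P}(0)<0$, the inequality $E'_{\mathcal P}(0) \le \tfrac{1}{\sin\theta}\bigl(2\pi\cos\theta - k\bigr)$ that you derive from the equal-angle claim is unjustified as written.

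The gap is reparable, and the repair is essentially what the paper does. Each distance satisfies $\theta_H \le \theta_i \le \pi/2$, so each shortening term obeys $1/\sin\theta_i \ge 1$ and the total shortening rate is at least $k$ (the paper sharpens this to $\deg(H)-2+2/\sin\theta_H$ using the at-least-two tangent edges); and since $\cot\theta_i \le \cot\theta_H$, the new face is a convex polygon contained in the disk of radius $t\cot\theta_H$, so your perimeter bound $2\pi t\cot\theta_H$ survives by monotonicity of perimeter of nested convex sets. This gives $E'_{\mathcal P}(0) \le 2\pi\cot\theta_H - k$, which is negative directly from the hypothesis $\deg(H) > 2\pi/\tan(\theta_H)$; no detour through $2\pi\cos\theta$ is needed. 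With that one correction your argument coincides with the paper's proof.
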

\begin{proof}
Similar to our construction in \ref{ExposedFace} we take $U_H \cap P$ and intersect it with a halfspace normal to $c_H$ intersecting with $H$ and translate it inward by $t$, we then replace $U_H\cap P$ with this intersection to get $P_t$.

  This construction specifically tells us that at $P_t$ we have vertices $H_{i,t}$ corresponding to each side of $g(H)$ with the length of the vector $v_{H_i}$ given by $t/sin(\theta_i)$ where $\theta_i$ is the distance from the corresponding edge of $g(H)$ to $c_H$. It's quick to see that under this perturbation we get $\Delta |V(t)|<ct^2$ so $V'(0)=0$ and in total:
  $$M'(0)=\frac{3E(0)^2}{V(0)}\left(\sum_i ||v_{H_i}-v_{H_{i+1}}||-||v_{H_i}||\right)$$ Finally we observe that:
$$\sum_i ||v_{H_i}-v_{H_{i+1}}||<\frac{2\pi}{\tan(\theta_H)} \text{ and } \sum_i ||v_{H_i}||>deg(H)-2+\frac{2}{\sin{\theta_H}}$$.\\ By hypothesis $deg(H)>\frac{2\pi}{\tan(\theta_H)}$ so we get our desired result.
\end{proof}
Should we want to rephrase this in terms of the angle deficit at $H$ we can use the following lemma.
\begin{lemma}
    The area of $g(H)$ is bounded above and below by $\theta_H$, respectively $4\theta_H$ and $2\pi(1-cos(\theta_H))$.
\end{lemma}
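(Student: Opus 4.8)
The plan is to regard $g(H)$ as a convex spherical polygon whose inscribed circle has spherical radius $\theta_H$ and center $c_H$, and to prove the two inequalities separately. Since a convex spherical polygon lies in a hemisphere we have $\theta_H\in(0,\pi/2]$, and it is worth recording the degenerate endpoint first: at $\theta_H=\pi/2$ the incircle is a great circle, so $g(H)$ contains a hemisphere and (being convex, hence contained in a hemisphere) must equal it, and both claimed bounds collapse to the common value $2\pi$. For $\theta_H<\pi/2$ one checks the internal consistency $2\pi(1-\cos\theta_H)\le 4\theta_H$ directly: the difference $f(\theta)=4\theta-2\pi(1-\cos\theta)$ satisfies $f(0)=f(\pi/2)=0$ and $f''=-2\pi\cos\theta<0$, so it is concave with vanishing endpoints and thus nonnegative on $(0,\pi/2)$.

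The lower bound is immediate. The inscribed disk $D(c_H,\theta_H)$ lies inside $g(H)$, and a spherical cap of angular radius $\theta_H$ has area $2\pi(1-\cos\theta_H)$; hence $\mathrm{Area}(g(H))\ge 2\pi(1-\cos\theta_H)$.

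The upper bound is the crux, and it is an isoperimetric-type statement rather than a containment. The first step is to identify the extremal body: the lune cut out by the two great circles tangent to the incircle at a pair of antipodal contact points. Computing the poles of these two tangent great circles shows their opening angle is exactly $2\theta_H$, so the lune has area $2\cdot 2\theta_H=4\theta_H$ while still having inradius exactly $\theta_H$; this pins down $4\theta_H$ as the conjectural maximum and explains why the two bounds meet at $\theta_H=\pi/2$ (the lune of angle $\pi$ is a hemisphere). I would then prove that, among convex bodies of inradius $\theta_H$, this lune maximizes area, by a variational reduction: starting from $g(H)$ one pushes the supporting great circles of the edges outward (away from $c_H$), which can only increase area, as far as convexity permits subject to retaining two antipodal contacts with the incircle so the inradius does not grow; the only configuration admitting no further such enlargement is the lune. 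Equivalently, writing the area in geodesic polar coordinates about $c_H$ as $\int_0^{2\pi}(1-\cos r(\phi))\,d\phi$ with radial function $r(\phi)\ge\theta_H$, maximality of the incircle (its contact points positively surround $c_H$) is precisely the constraint that forbids $r(\phi)$ from being large on too large a set of directions, and the lune realizes the extremal profile.

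The hard part will be making this extremal claim fully rigorous, because pushing a single edge outward increases its distance to $c_H$ and, in isolation, would admit a larger inscribed disk; the argument must therefore track the whole system of contact points and show the supremum of area at fixed inradius is attained only in the lune limit. I expect the cleanest resolution is to reduce via the tangent contacts to the two–edge case and verify the one–parameter bound $\mathrm{Area}\le 4\theta_H$ there. A more computational alternative is to decompose $g(H)$ into right spherical triangles from $c_H$ along the tangent segments, each having one leg equal to $\theta_H$, and to sum the corresponding spherical excesses; the single–triangle estimate is routine, but the aggregation again needs the same maximality input. Combining the two bounds, together with $2\pi(1-\cos\theta_H)\le 4\theta_H$, yields the stated two–sided estimate and allows $\theta_H$ to be replaced by the angle deficit (the area of $g(H)$) in the degree condition of Theorem~\ref{ExposedVertex}.
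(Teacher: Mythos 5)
Your lower bound is exactly the paper's argument (incircle containment plus the spherical cap area formula), and your identification of the bigon of angle $2\theta_H$, with area $4\theta_H$, as the extremal body is also the right picture --- including the endpoint check at $\theta_H=\pi/2$ and the consistency inequality $2\pi(1-\cos\theta_H)\le 4\theta_H$. But the upper bound is where the content of the lemma lies, and your proposal does not prove it: you say yourself that making the extremal claim rigorous is ``the hard part,'' and both of your suggested routes (pushing supporting great circles outward, or decomposing into right spherical triangles from $c_H$) are left as plans which, as you note, ``need the same maximality input.'' As written, the proposal establishes only the lower bound.

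The missing step is handled in the paper by a dichotomy on the contact points of the incircle, which you allude to (``contact points positively surround $c_H$'') but never exploit. Since the inscribed circle is maximal, its contact points cannot all lie in an open half-circle, so either (a) two contact points are antipodal on the incircle, in which case $g(H)$ is contained in the lune cut out by the two tangent great circles at those points --- this lune has angle $2\theta_H$ and area $4\theta_H$, so the bound follows by pure containment with no variational argument at all; or (b) the incircle is tangent to at least three edges, and then the paper writes the area, up to an additive constant, via the spherical law of cosines as $2\left(\arccos(\sin(\alpha/2)\cos\theta_H)+\arccos(\sin((\beta-\alpha)/2)\cos\theta_H)\right)+C$ in terms of the angular positions $\alpha,\beta$ of contacts, and checks that the derivative in $\alpha$ is negative for $\alpha<\beta/2$; hence sliding contact points apart strictly increases area, and the supremum is approached only in the bigon limit. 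Note also that your proposed deformation ``subject to retaining two antipodal contacts'' presupposes case (a); when no antipodal contact pair exists --- the generic situation, e.g.\ $g(H)$ a small equilateral spherical triangle --- it gives no instruction, and that is precisely the case requiring the monotonicity computation in (b). To complete your proof you would need to supply this computation or an equivalent argument.
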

\begin{proof} The first bound is obvious as the area within the incircle must be less than the area of $g(H)$, specifically $\text{Area}(g(H)) > 2\pi (1-cos(\theta_H))$.
The second bound relies on the fact that either the incircle is tangent to two geodesics on opposite sides or to at least three geodesics. In the former case $\text{Area}(g(H)) \leq 4\theta_H$ attaining equality when inscribed in a bigon. In the latter case we take $\alpha,\beta$ as illustrated and using the spherical cosine rule calculate up to a constant:
\begin{center}
\includegraphics[scale=0.75]{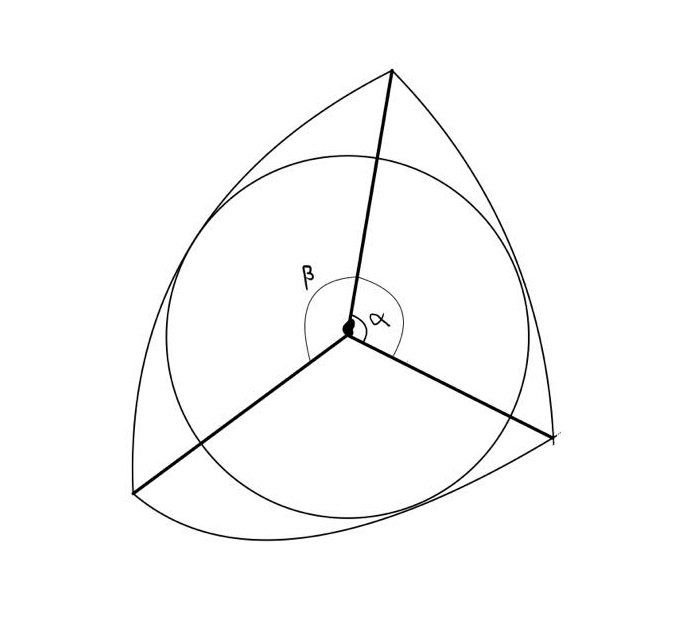}
\end{center}
$$\text{Area}(g(H)) = 2\left(arccos(sin(\alpha/2)cos(\theta_H))+arccos(sin((\beta-\alpha)/2)cos(\theta_H))\right)+C$$
Taking the derivative in $\alpha$ gives us:
$$\frac{d}{d\alpha}\text{Area}(g(H)) = \frac{cos(\theta_H)cos(\frac{\beta-\alpha}{2})}{2\sqrt{1-cos^2(\theta_H)sin^2(\frac{\beta-\alpha}{2})}}-\frac{cos(\theta_H)cos(\frac{\alpha}{2})}{2\sqrt{1-cos^2(\theta_H)sin^2(\frac{\alpha}{2})}}$$
Which we can verify is negative if $\alpha<\beta/2$. Thus the area is maximized when $g(H)$ approaches a bigon.
\end{proof}
\begin{corollary}
    Letting $\alpha$ be the angle deficit at $H$ then $\ref{ExposedVertex}$ holds under the condition that $deg(H)>\frac{1-\frac{\alpha}{2\pi}}{\sqrt{1-(1-\frac{\alpha}{2\pi})^2}}$.
\end{corollary}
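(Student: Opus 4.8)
The plan is to read Theorem~\ref{ExposedVertex} as a statement whose hypothesis is phrased in the inradius $\theta_H$ of the Gauss image, and to translate that hypothesis into the angle deficit $\alpha$ using the dictionary assembled in the preceding remarks and the Lemma. The bridge is the discrete Gauss--Bonnet remark: the angle deficit $\alpha = 2\pi - \sum_i \alpha_i$ equals the algebraic area $\mathrm{Area}(g(H))$ of the Gauss image. It therefore suffices to eliminate $\theta_H$ between the relation $\alpha = \mathrm{Area}(g(H))$ and the area--inradius estimate of the Lemma, and to check that the resulting inequality in $\alpha$ is at least as strong as the $\theta_H$-hypothesis of Theorem~\ref{ExposedVertex}.

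First I would record the cap estimate from the Lemma, $\mathrm{Area}(g(H)) \ge 2\pi(1-\cos\theta_H)$, and combine it with $\alpha = \mathrm{Area}(g(H))$ to obtain $\cos\theta_H \ge 1 - \tfrac{\alpha}{2\pi}$. Writing $x := 1 - \tfrac{\alpha}{2\pi}$ and using $\sin\theta_H = \sqrt{1-\cos^2\theta_H}$, this controls $\cot\theta_H = \cos\theta_H/\sin\theta_H$. Evaluating $\dfrac{\cos\theta_H}{\sqrt{1-\cos^2\theta_H}}$ at the extremal value $\cos\theta_H = x$ furnished by equality in the cap estimate produces exactly $\dfrac{1-\frac{\alpha}{2\pi}}{\sqrt{1-(1-\frac{\alpha}{2\pi})^2}}$, the threshold appearing in the statement. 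The corollary would then follow by feeding this into the hypothesis $\deg(H) > 2\pi/\tan\theta_H$ of Theorem~\ref{ExposedVertex}.

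The step that needs the most care---and what I expect to be the main obstacle---is fixing the direction of the inequality, i.e. deciding which of the two Lemma bounds to use and in which orientation. Since $\deg(H) > 2\pi/\tan\theta_H = 2\pi\cot\theta_H$ becomes \emph{easier} to satisfy as $\theta_H$ grows, a genuinely \emph{sufficient} condition phrased in $\alpha$ requires a lower bound on $\theta_H$, hence control of the worst-case Gauss image of fixed area; this is precisely the bigon/cap extremality already isolated in the proof of the Lemma. I would therefore verify two things: that the map $c \mapsto c/\sqrt{1-c^2}$ is monotone on the relevant range of $\cos\theta_H$, and that the cap realizing equality in the area estimate is the extremal configuration for the degree bound. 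Reconciling these with the factor $2\pi$ in the theorem's threshold is the delicate point, and it is exactly there that one must confirm the stated $\alpha$-threshold dominates the $\theta_H$-threshold of Theorem~\ref{ExposedVertex}, so that $\deg(H)$ exceeding it forces $\deg(H) > 2\pi/\tan\theta_H$ and the perturbation of Theorem~\ref{ExposedVertex} applies.
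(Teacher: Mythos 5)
Your sketch retraces the paper's own proof step for step: the paper also combines $\alpha=\mathrm{Area}(g(H))$ with the cap estimate $\alpha>2\pi(1-\cos(\theta_H))$, uses that $\arccos$ is decreasing to get $\theta_H<\arccos(1-\tfrac{\alpha}{2\pi})$, and uses that $2\pi/\tan(\cdot)$ is decreasing to conclude
$$\frac{2\pi}{\tan(\theta_H)}>\frac{2\pi}{\tan(\arccos(1-\frac{\alpha}{2\pi}))}=\frac{2\pi-\alpha}{\sqrt{1-(1-\frac{\alpha}{2\pi})^2}}.$$
But the issue you defer as ``the delicate point'' is not delicate --- it is fatal, for exactly the reason you yourself state. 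Since $\deg(H)>2\pi/\tan(\theta_H)$ becomes easier to satisfy as $\theta_H$ grows, a sufficient condition in $\alpha$ requires a \emph{lower} bound on $\theta_H$, equivalently an \emph{upper} bound on $2\pi/\tan(\theta_H)$. The cap estimate delivers the opposite: the cap is the configuration of maximal inradius for given area, so it bounds $\theta_H$ from above and $2\pi/\tan(\theta_H)$ from below. Consequently the domination you propose to ``confirm'' is false --- the chain above shows the theorem's threshold $2\pi/\tan(\theta_H)$ sits strictly \emph{above} the $\alpha$-expression (and the corollary's stated threshold is smaller still, by a further factor of $2\pi$, which is the second discrepancy you noticed). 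Knowing that $\deg(H)$ exceeds a quantity lying below the theorem's threshold yields nothing, so along the cap-bound route the corollary cannot be closed; this gap is present in the paper's own proof as well, which silently reads the final inequality in the unusable direction.

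The ingredient you would need is the half of the Lemma you never invoke: the bigon bound $\mathrm{Area}(g(H))\leq 4\theta_H$. The worst-case (minimal-inradius) Gauss image of fixed area is the bigon, not the cap, and that bound runs in the useful direction: $\alpha\leq 4\theta_H$ gives $\theta_H\geq\alpha/4$, hence $2\pi/\tan(\theta_H)\leq 2\pi/\tan(\alpha/4)$, so $\deg(H)>2\pi/\tan(\alpha/4)$ is a legitimate sufficient condition for the perturbation of Theorem \ref{ExposedVertex} to apply. Your instinct to identify the extremal configuration of fixed area was the right one; taking it to be the cap rather than the bigon is precisely where the proposal (and the paper) goes wrong.
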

\begin{proof}
    We know $\alpha > 2\pi (1-cos(\theta_H))$ and since $arccos$ is decreasing we also know $\theta_H < arccos(1-\frac{\alpha}{2\pi})$. Again $\frac{2\pi}{tan(\cdot)}$ is decreasing so we conclude that:
    $$\frac{2\pi}{tan(\theta_H)} > \frac{2\pi}{tan(arccos(1-\frac{\alpha}{2\pi}))} >  \frac{2\pi-\alpha}{\sqrt{1-(1-\frac{\alpha}{2\pi})^2}}$$
\end{proof}
Again there is a slight generalization to our conditions that we can make. Specifically if $P^C$ has an exposed vertex at $H$ we can get similar results. To use more comfortable language we'll say vertex is \emph{negatively exposed}.
\begin{lemma}\label{FirstNegative}
If $P$ has a negatively exposed vertex $H$ with $deg(H)>\frac{2\pi}{tan(\theta_{H_C})}$ we can perturb $P$ to get a lower Melzak ratio and one more face.
\end{lemma}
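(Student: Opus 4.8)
The plan is to mirror the construction of \ref{ExposedVertex} verbatim, applying it to the complement $P^C$ rather than to $P$ itself. By hypothesis $H$ is negatively exposed, so there is a neighborhood $U_H$ in which $U_H \cap P^C$ is convex; the corner of $P^C$ at $H$ is therefore an honest convex spherical cone, its Gauss image $g(H_C)$ is a convex spherical polygon, and the incenter $c_{H_C}$ together with the inradius $\theta_{H_C}$ are well defined. The perturbation $\mathcal{P}=\{P_t\}$ is obtained by intersecting $U_H \cap P^C$ with the halfspace whose bounding plane is normal to $c_{H_C}$, passes through $H$, and is translated into $P^C$ by $t$, then replacing $U_H \cap P^C$ by this intersection. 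Passing back to complements, this shaves the corner off of $P^C$, which is exactly the same as gluing a small cap onto $P$. For $t$ small this is a legitimate convex modification of the complement, so $P_t$ is again a polyhedron and the cutting plane contributes exactly one new face.

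First I would record that the local bookkeeping is formally identical to that of \ref{ExposedVertex}. Each edge of $g(H_C)$ produces a new vertex $H_{i,t}$, the displacement vector $v_{H_i}$ has length $t/\sin(\theta_i)$ with $\theta_i$ the spherical distance from the $i$-th edge of $g(H_C)$ to $c_{H_C}$, and the new edges run between consecutive $H_{i,t}$. Since the edge incidences at $H$ are dictated solely by $g(H_C)$, the rate of change of edge length is given by the same expression
$$E'(0) = \sum_i \|v_{H_i} - v_{H_{i+1}}\| - \|v_{H_i}\|,$$
and the two estimates from \ref{ExposedVertex},
$$\sum_i \|v_{H_i} - v_{H_{i+1}}\| < \frac{2\pi}{\tan(\theta_{H_C})} \quad\text{and}\quad \sum_i \|v_{H_i}\| > deg(H) - 2 + \frac{2}{\sin(\theta_{H_C})},$$
carry over unchanged. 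The only sign difference is in the volume: here we glue the cap onto $P$ rather than carve it out, so the volume increases, but the identical second-order estimate $|\Delta V| < c t^2$ of \ref{ExposedVertex} applies and $V'(0)=0$ exactly as before.

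With $V'(0)=0$ the Melzak derivative reduces to
$$M'(0) = \frac{3E(0)^2}{V(0)}\left(\sum_i \|v_{H_i} - v_{H_{i+1}}\| - \|v_{H_i}\|\right),$$
and combining the two estimates yields $M'(0)<0$ precisely when $deg(H) > \frac{2\pi}{\tan(\theta_{H_C})}$, which is the hypothesis. The main obstacle is not analytic but structural: one must confirm that \emph{negatively exposed} genuinely presents the complement as a convex cone to which the Gauss-image construction of \ref{ExposedVertex} applies without modification, and that gluing the cap onto $P$ (rather than excising it) leaves the global combinatorics of $P$ intact for small $t$, so that the face count increases by exactly one and no spurious self-intersections are created.
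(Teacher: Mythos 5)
Your proposal is correct and matches the paper's own proof essentially verbatim: the paper likewise applies the construction of \ref{ExposedVertex} to $U_H \cap P^C$, takes the incenter of the Gauss image of the complement, cuts with the translated halfspace to form $R_t$, and recovers $P_t$ as $U_H \setminus R_t$ (your ``gluing a cap onto $P$''), with $V'(0)=0$ and the identical edge-length estimates finishing the argument. Your closing structural caveat is precisely what the definition of \emph{negatively exposed} supplies, so nothing further is needed.
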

\begin{proof}
We start by taking $U_H\cap P^C$ and again map the adjacent faces of $P^C$ to $S^2$ via the Gauss map. As before we take the center of the incenter $c_H$, and intersect $U_H \cap P^C$ with a halfspace normal to $c_H$ and translated inward by $t$ to get $R_t$. Finally get $P_t$ by replacing $(U_H\cap P)$ with $U_H\setminus R_t$.\\
As in \ref{ExposedVertex} $V'(0)=0$ and $E'(0)$ will be defined the exact same way so we can again get our desired result by having $deg(H)$ large enough.
\end{proof}
\begin{remark}
We can in fact carry over a proof of \ref{SemiExposedFace} (and implicitly \ref{ExposedFace}) directly by inverting the perturbations as we did in the proof of \ref{FirstNegative}. We'll omit the explicit construction and calculations, but suffice it to say the broadest generalization we get is the following.
\end{remark}
\begin{theorem}\label{BroadestGeneralization}
In addendum to \ref{SemiExposedFace},\ref{ExposedVertex}, and \ref{FirstNegative}, if $P$ has a face with all but two adjacent vertices negatively exposed then either all but possibly those two must be of degree 3 or we can perturb $P$ to improve the Melzak ratio.
\end{theorem}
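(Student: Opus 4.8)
The plan is to mirror the argument of Theorem \ref{SemiExposedFace} inside the complement $P^C$, exactly as the proof of Lemma \ref{FirstNegative} mirrored Theorem \ref{ExposedVertex}. Near each negatively exposed vertex of the face the set $P^C$ is locally convex, so a neighborhood of such a vertex looks, from the complement's point of view, identical to the locally convex corner that drives the proof of \ref{ExposedFace}. First I would fix the edge $\ell$ joining the two adjacent vertices that are not assumed negatively exposed, and set up the two perturbations $\mathcal{P}$ and $\mathcal{Q}$ that rotate $F$ about $\ell$ in the two opposite directions, defining the vertex velocities by the one-sided limit $v_H = \lim_{t \to 0^+}(H_t - H)/t$ as in \ref{SemiExposedFace}.

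Next I would record the first-order data. The region swept between $F$ and its rotated image is, to first order in $t$, a wedge about $\ell$ whose signed volume rate is independent of the local degree structure at the individual vertices; consequently $V'_{\mathcal{P}}(0) = -V'_{\mathcal{Q}}(0)$, so the two volume contributions to $M'_{\mathcal{P}}(0) + M'_{\mathcal{Q}}(0)$ cancel. What remains is the edge-length term, which I would again attribute vertex by vertex, writing $E'(0) = \sum_H E'_{\cdot,H}(0)$.

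The heart of the matter is then to run the triangle-inequality trick of \ref{ExposedFace} (Figure \ref{Geotrick}) at each negatively exposed vertex. For a degree-$3$ vertex the contributions to $\mathcal{P}$ and to $\mathcal{Q}$ are exactly antisymmetric, so these vertices, together with the two endpoints of $\ell$, cancel in the sum. For a negatively exposed vertex $H$ of degree $k > 3$, rotating $F$ in one direction splits $H$ into $k-2$ vertices of the complement, and the same iterated application of the triangle inequality yields the strict inequality $E'_{\mathcal{Q},H}(0) < -E'_{\mathcal{P},H}(0)$. Because this inequality is a purely local statement about the convex cone that $P^C$ subtends at $H$, it transfers verbatim from \ref{ExposedFace}. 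Summing over all vertices of $F$ gives $M'_{\mathcal{P}}(0) + M'_{\mathcal{Q}}(0) < 0$ whenever some negatively exposed vertex has degree exceeding $3$, so at least one of $\mathcal{P}, \mathcal{Q}$ strictly lowers the Melzak ratio; this is the contrapositive of the claimed degree-$3$ condition.

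The main obstacle I anticipate is bookkeeping rather than geometry: one must verify that a single coherent rotation of $F$ induces, consistently across all the negatively exposed vertices, the ``splitting'' direction on which the triangle-inequality trick relies, and that the sign conventions inherited from passing to the complement line up so that the volume first-order terms genuinely cancel. Once the orientation conventions are pinned down, the per-vertex estimates are identical to those already established in \ref{ExposedFace} and \ref{SemiExposedFace}, and the conclusion follows.
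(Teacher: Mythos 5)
Your proposal is correct and follows essentially the same route the paper intends: the paper's own ``proof'' is nothing more than the preceding remark, which says to carry over the proof of \ref{SemiExposedFace} by inverting the perturbations into the complement as in \ref{FirstNegative}, and your construction (rotation of $F$ about the edge joining the two exceptional vertices, one-sided velocity limits, antisymmetric volume terms, and the per-vertex triangle-inequality estimate at negatively exposed vertices of degree greater than $3$) is precisely that argument spelled out. If anything, you supply more of the bookkeeping than the paper itself records, since it explicitly omits the construction and calculations.
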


\section{Geometric Bounds}
In the last section we got a handle on local criteria for the vertex degree of minimizing candidates, in this one we attempt to do something similar for the faces of candidates. The first case we will work on is an exposed triangle, then a dihedral bound in the convex problem, and finally several quadrilateral cases.

\begin{remark}
For any polyhedron we're aware that the faces of high degree are bounded by the faces of low degree, specifically  $\sum_F (deg(F)-6)\leq -6\chi(P)$. Successfully bounding the low-degree faces up to 6 (in any class) would allow us to easily demonstrate that there exists a minimizer. Even more limited bounding may give us obstructions to constructing competitors to the triangular prism.
\end{remark}

\subsection{Triangular Faces}
We recall that on a minimizing candidate an exposed face it must have vertices only of degree $3$. To help understand our next perturbation we'll imagine sliding a vertex H (negatively) in the direction of the third edge which does not lay on our triangular face. If we think about each of the perturbations as moving vertices of the triangle we see they are all linear combinations of sliding one vertex along the induced vector. Concretely, we'll use the same perturbation as described for $\mathcal{P}$ in \ref{SemiExposedFace}, fixing all but one vertex $H_1$.
\begin{theorem}
\label{FirstDeficitBound}
If $P$ has an exposed triangular face $F$ with the sum of angle deficits at the vertices of $F$ (here the angle deficit associated with $F$) less than or equal to $\pi/2$ we can perturb $P$ to another polyhedron with lesser Melzak ratio and the same number of faces. 
\end{theorem}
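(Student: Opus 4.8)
The plan is to run the one-vertex perturbation just set up: for each vertex $H_i$ ($i=1,2,3$) of the exposed triangle $F$, slide $H_i$ along the supporting line of its unique third edge (the edge not lying in $F$), keeping the other two vertices fixed. Since $H_i$ has degree $3$, the two side faces meeting along that third edge both contain its supporting line, so they stay planar, $F$ stays a (tilted) triangle, and the combinatorics — in particular the number of faces — is unchanged for small $t$; this already secures the ``same number of faces'' conclusion. Because reversing the slide sends $(E'(0),V'(0))$ to its negative and hence $M'(0)$ to $-M'(0)$, and both directions are admissible for small $t$, it suffices to exhibit one index $i$ with $M'(0)\neq 0$: one of the two directions then strictly lowers the Melzak ratio. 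Thus the theorem reduces to showing that the three first-order stationarity conditions $\partial_i M=0$ cannot hold simultaneously under the deficit hypothesis.

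Next I would compute the two derivatives for the inward slide of $H_i$ at unit speed. Only three edges change length: the third edge shrinks at rate $1$, while the two triangle edges at $H_i$ change at rates $-\cos\alpha_i$ and $-\cos\beta_i$, where $\alpha_i,\beta_i$ are the interior angles at $H_i$ of the two side faces adjacent to $F$ (equivalently, the angles the third edge makes with the two triangle edges). Hence $E'(0)=-\big(1+\cos\alpha_i+\cos\beta_i\big)$. For the volume, sliding a single vertex pivots $F$ about the opposite edge and sweeps a tetrahedral wedge, giving $V'(0)=-\tfrac13 A_F\sin\psi_i$, where $A_F$ is the area of $F$ and $\psi_i\in(0,\tfrac\pi2]$ is the inclination of the third edge to the plane of $F$ (the factor $\tfrac13$, rather than the full $A_F$ of the whole-face slide in \ref{ExposedFace}, is the essential difference between translating a face and moving one vertex). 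Substituting into the formula for $M'$, stationarity $\partial_i M=0$ is equivalent to
\[
1+\cos\alpha_i+\cos\beta_i=\lambda\sin\psi_i,\qquad \lambda:=\frac{E\,A_F}{9V}>0 .
\]

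Now I would exploit the sign. If all three conditions held, then since $\lambda\sin\psi_i\ge 0$ we would get $\cos\alpha_i+\cos\beta_i\ge -1$ for each $i$. An elementary optimization shows that on $(0,\pi)^2$ the constraint $\cos\alpha+\cos\beta\ge -1$ forces $\alpha+\beta<\tfrac{3\pi}{2}$ (the supremum $\tfrac{3\pi}{2}$ is only approached as $\alpha\to\pi,\ \beta\to\tfrac\pi2$). Summing over $i$ gives $\sum_i(\alpha_i+\beta_i)<\tfrac{9\pi}{2}$. Finally I convert to deficits: writing $\gamma_i$ for the angle of $F$ at $H_i$ and using $\sum_i\gamma_i=\pi$, the total deficit is $\sum_i\delta_i=\sum_i\big(2\pi-\gamma_i-\alpha_i-\beta_i\big)=5\pi-\sum_i(\alpha_i+\beta_i)>\tfrac{\pi}{2}$. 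This contradicts the hypothesis $\sum_i\delta_i\le\tfrac{\pi}{2}$; hence some $\partial_i M\neq 0$, and the appropriate slide of $H_i$ decreases the Melzak ratio while preserving the face count.

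The main obstacle is getting the first-order bookkeeping exactly right — especially the volume derivative, where one must confirm that the side faces contribute nothing (they pivot about a line they contain) and that the correct weight is $\tfrac13 A_F\sin\psi_i$ rather than $A_F\langle n_F,\cdot\rangle$ — together with justifying the reduction to a single nonvanishing $\partial_i M$ (the antisymmetry under reversing the slide and the admissibility of both directions for small $t$). Once these are settled, the trigonometric lemma $\cos\alpha+\cos\beta\ge -1\Rightarrow\alpha+\beta<\tfrac{3\pi}{2}$ and the deficit identity $\sum_i\delta_i=5\pi-\sum_i(\alpha_i+\beta_i)$ are routine, and the hypothesis $\sum_i\delta_i\le\tfrac{\pi}{2}$ is seen to be exactly the threshold $\sum_i(\alpha_i+\beta_i)\ge\tfrac{9\pi}{2}$ that breaks stationarity.
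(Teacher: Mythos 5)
Your proposal is correct and follows essentially the same route as the paper's proof: the same one-vertex slide along the supporting line of the third edge, the same per-vertex angle condition $\cos\alpha_i+\cos\beta_i\ge -1$ (which is the paper's $\cos\gamma_{i,2}+\cos\gamma_{i,3}<1$ after the change of convention $\gamma=\pi-\alpha$), and the same conversion to the total deficit $\sum_i\delta_i=5\pi-\sum_i(\alpha_i+\beta_i)>\tfrac{\pi}{2}$. The only cosmetic differences are that you argue via two-sided stationarity and compute $V'(0)=-\tfrac13 A_F\sin\psi_i$ explicitly, whereas the paper uses only the outward slide and needs nothing more than the sign $V'(0)>0$.
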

\begin{proof}
 Letting $\gamma_{1,2}$ be the angle between $v_{H_1}$ and the line from $H_1$ to $H_2$ and using the $\mathcal{P}$ described above we get:

$$V'(0)>0\text{ and }E'(0)=||v_{H_1}||-<v_{H_{1}},u_{1,H_1,0}+u_{2,H_1,0}>=||v_{H_{1}}||(1-\cos(\gamma_{1,2})-\cos(\gamma_{1,3}))$$
Should $M'(0)\geq 0$ we need $1>\cos(\gamma_{1,2})+\cos(\gamma_{1,3}))$. Taking the sum all $H_{\cdot}$ gives us $$3>\sum_{i,j \in\{ 1,2,3\}} \cos(\gamma_{i,j})$$
Now we can observe that the angle deficit associated with $F$ is: $$\sum_{i,j \in\{ 1,2,3\}}\gamma_{i,j}-\pi>\frac{\pi}{2}$$
With the inequality algebraically tight as $\gamma$ approach alternating $0$ and $\pi/2$ (Note: this is not a geometric possibility so the inequality is far from sharp). Thus, if the associated angle deficit is less than or equal to $\pi/2$ one of the aforementioned $M$ satisfy $M'(0)<0$.
\end{proof}
Note in this proof we also got a small condition the exposed vertices of triangular faces that $1>\cos(\gamma_{1,2})+\cos(\gamma_{1,3}))$. Unfortunately we cannot make this any stronger as $V'(0)$ may become arbitrarily small on a small enough triangular face.\\

Taken together with \ref{ExposedFace} we can make a finiteness claim about a Melzak problem.

\begin{corollary}\label{FiniteTriangles}
If $P$ is in the sequence from \ref{MinimizingSequence} then $P$ has at most $14$ triangular faces.
\end{corollary}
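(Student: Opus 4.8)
The plan is to combine the two face-count-preserving perturbation results with discrete Gauss--Bonnet. Because $P$ is convex, every face is exposed, so the corollary following \ref{ExposedFace} applies and every vertex of $P$ has degree exactly $3$. Moreover $P$ minimizes the Melzak ratio among convex polyhedra with the same number of faces (property (iii) of \ref{MinimizingSequence}), and the perturbation of \ref{FirstDeficitBound} preserves the face count; hence no exposed triangular face can have associated angle deficit at most $\pi/2$. Writing $\delta(H)$ for the angle deficit at a vertex $H$, this means every triangular face $F$ satisfies $D(F):=\sum_{H\in F}\delta(H)>\pi/2$.

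First I would sum this inequality over all $T$ triangular faces and reorganize by vertices. Letting $t(H)$ denote the number of triangular faces incident to $H$, one gets
$$\frac{\pi}{2}\,T<\sum_{F\text{ triangular}}D(F)=\sum_{H}t(H)\,\delta(H).$$
Since every vertex has degree $3$ it lies on at most three faces, so $t(H)\le 3$; and since $P$ is convex (genus zero), the total angle deficit is $\sum_{H}\delta(H)=4\pi$ by Descartes' theorem, which is the algebraic-area interpretation of the Gauss image recorded in the remarks of Section~2. Substituting the crude bound $t(H)\le 3$ then yields $\tfrac{\pi}{2}T<3\sum_{H}\delta(H)=12\pi$, establishing finiteness of $T$ immediately.

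The main obstacle is sharpening the multiplicity estimate $t(H)\le 3$ to reach the stated constant. The bound above behaves as though every vertex could be surrounded by three triangles each extracting the \emph{full} deficit $\delta(H)$, which the global identity $\sum_{H}t(H)=3T$ together with realizability forbids; the tetrahedron ($T=4$) shows the degenerate all-triangular case is consistent, so the argument must genuinely exploit curvature that is \emph{not} localized at triangle corners. To close the gap I would pass to the Gauss-image triangulation, in which triangular faces of $P$ correspond precisely to the degree-$3$ dual vertices and $D(F)$ is the total spherical area of the three spherical triangles surrounding such a vertex. The goal is then to show that the stars of these degree-$3$ vertices cannot simultaneously absorb their full share of the $4\pi$ of area on $S^{2}$ — equivalently, that a definite proportion of the curvature must be carried by the higher-degree dual vertices, i.e.\ by the non-triangular faces of $P$. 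Quantifying this unavoidable overlap, which reduces the effective multiplicity below $3$, is exactly the step that pins the count at $14$, and it is the only part of the argument that is not a direct consequence of \ref{ExposedFace}, \ref{FirstDeficitBound}, and Gauss--Bonnet.
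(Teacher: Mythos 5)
Your setup coincides with the paper's: every vertex has degree $3$ by the corollary to \ref{ExposedFace}, and every triangular face $F$ satisfies $D(F)>\pi/2$ by \ref{FirstDeficitBound} together with property (iii) of \ref{MinimizingSequence}, since that perturbation preserves the face count. The gap is exactly where you locate it --- the multiplicity estimate --- but the fix is far more elementary than the Gauss-image overlap analysis you sketch. The missing observation is purely combinatorial: at a degree-$3$ vertex $H$ there are only three incident faces, and if all three were triangles, then the three neighbors of $H$ would be pairwise adjacent and each would already have its full complement of three neighbors, forcing $P$ to be the tetrahedron (which has $4\le 14$ triangular faces and is handled separately). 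Hence on any non-tetrahedral element of the sequence every vertex meets at most \emph{two} triangular faces, i.e.\ $t(H)\le 2$. Your counting inequality then gives $\frac{\pi}{2}T<\sum_H t(H)\,\delta(H)\le 2\cdot 4\pi=8\pi$, so $T\le 15$.

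The paper then tightens $15$ to $14$ by one more combinatorial step: with $15$ triangles and multiplicity at most $2$, some triangle shares no vertex with any other triangle, so its three vertices are counted with multiplicity $1$ rather than $2$; subtracting their deficit (which exceeds $\pi/2$) from the $8\pi$ budget gives $\sum_F D(F)\le 8\pi-\pi/2$, contradicting $\sum_F D(F)>15\cdot\pi/2$. Your version, with $t(H)\le 3$, only yields $T\le 23$ --- finiteness, but not the stated constant --- and the route you propose for closing the gap (quantifying how much of the $4\pi$ of Gauss-image area the stars of degree-$3$ dual vertices can absorb) is left uncarried-out and is not what is needed: the multiplicity reduction comes from the tetrahedron dichotomy, not from any quantitative overlap estimate on $S^2$.
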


\begin{proof}
 When $P$ is a polyhedron all vertices must be of degree $3$, therefore the only case where three triangular faces share a vertex is the tetrahedron. The polyhedral Gauss-Bonnet theorem \cite{GaussBonnet} tells us that the angle deficit at all vertices adjacent with a triangle (in fact all vertices) is $4\pi$ so \ref{FirstDeficitBound} tells us that there are at most $30$ vertices adjacent to triangular faces, thus at most $15$ triangles. Noticing that then there is one triangle which is not adjacent to another, thus contributing vertices with angle deficit greater than $\pi/2$, we tighten our bound down to $14$ triangular faces.\end{proof}

Unsurprisingly we can say something similar about negatively exposed triangles. While we won't get any helpful corollaries we would be remiss not to mention it.

\begin{claim}
If $P$ has a negatively exposed triangular face $F$ with the sum of angle deficits at the vertices of $F$ greater than or equal to $\pi$ we can perturb $P$ to another polyhedron with lesser Melzak ratio and the same number of faces. 
\end{claim}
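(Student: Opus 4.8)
The plan is to rerun the proof of Theorem~\ref{FirstDeficitBound} with the perturbation inverted, in the same way that Lemma~\ref{FirstNegative} inverts the construction of Theorem~\ref{ExposedVertex}. Since $F$ is negatively exposed, its vertices are negatively exposed, and on a minimizing candidate I may assume, exactly as in \ref{FirstDeficitBound} and by the degree results of the previous section (\ref{FirstNegative}, \ref{BroadestGeneralization}), that the vertices of $F$ have degree $3$; thus each $H_i$ has a single third edge not lying in $F$, and the single-vertex slide $\mathcal{P}$ of \ref{SemiExposedFace}—fix the opposite edge of $F$ and move $H_i$ along its third edge—is available. I keep the angles $\gamma_{i,j}$ defined exactly as in \ref{FirstDeficitBound}, so that the identity $\sum_{i,j}\gamma_{i,j}-\pi=\sum_i\delta_i$ relating them to the angle deficits $\delta_i$ at the vertices of $F$ holds verbatim (it is a statement about the face angles at the $H_i$ and is insensitive to convexity).

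The one genuine change is a sign. For an exposed vertex the third edge points to the inner side of $F$, so the volume-increasing slide pushes $H_i$ to the outer side and lengthens that edge; for a negatively exposed vertex the local picture is reflected and the third edge points to the same side as the outward normal $n_F$ of $F$. Consequently the slide that increases $v(P)$ now moves $H_i$ toward the far end of its third edge and shortens it, so the third-edge term in the edge-length derivative reverses sign. Carrying this one change through the computation of \ref{FirstDeficitBound} gives, for this volume-increasing slide,
$$V'(0)>0 \qquad\text{and}\qquad E'(0)=\|v_{H_i}\|\bigl(\cos(\gamma_{i,j})+\cos(\gamma_{i,k})-1\bigr),$$
i.e. exactly the negative of the bracket obtained in \ref{FirstDeficitBound}.

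From here the argument runs as before but with every inequality reversed. As in \ref{FirstDeficitBound}, $M'(0)\ge 0$ together with $V'(0)>0$ forces $E'(0)>0$, hence $\cos(\gamma_{i,j})+\cos(\gamma_{i,k})>1$ at each vertex of a minimizer. The elementary optimization dual to the one in \ref{FirstDeficitBound}—namely that on $[0,\pi]^2$ the constraint $\cos a+\cos b\ge 1$ forces $a+b\le \tfrac{2\pi}{3}$, the maximum being attained at $a=b=\tfrac{\pi}{3}$—then yields $\gamma_{i,j}+\gamma_{i,k}<\tfrac{2\pi}{3}$ at each of the three vertices. Summing and invoking $\sum_{i,j}\gamma_{i,j}-\pi=\sum_i\delta_i$ gives $\sum_i\delta_i<\pi$ for any minimizer. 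The contrapositive is the claim: if $\sum_i\delta_i\ge\pi$ then some vertex fails $\cos(\gamma_{i,j})+\cos(\gamma_{i,k})>1$, so at that vertex $E'(0)\le 0$ while $V'(0)>0$, whence $M'(0)<0$; and since the slide only pivots $F$, the face count is unchanged.

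I expect the main obstacle to be pinning down the sign of $V'(0)$ rigorously, that is, verifying that in the concave configuration the third edge really does lie on the outward side of $F$ so that the volume-increasing slide shortens it. This is precisely where negative exposedness is used, and it is cleanest to phrase it through the orientation of $\langle w,n_F\rangle$ for the third-edge direction $w$ (equivalently, via $P^C$, where $F$ is an exposed face and the slide that grows $v(P^C)$ shrinks $v(P)$), together with the standard formula $V'(0)=\pm\tfrac13 A(F)\langle v_{H_i},n_F\rangle$ for a single pivoting vertex. The degree-$3$ reduction, the edge-length derivative, and the trigonometric bound are then routine adaptations of \ref{FirstDeficitBound}.
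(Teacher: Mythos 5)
Your proposal is correct and takes essentially the same route as the paper's own proof: invert the single-vertex rotation of \ref{SemiExposedFace}/\ref{FirstDeficitBound} (formalized through $P^C$ as in \ref{FirstNegative}), note that the volume-increasing slide now moves $H_i$ along its third edge so that $E'(0)$ is the negation of the exposed-case bracket while $V'(0)>0$, then turn the per-vertex minimality condition into an angle bound and conclude by contrapositive from the deficit identity. The only divergence is bookkeeping: you keep the angles $\gamma_{i,j}$ of \ref{FirstDeficitBound} (complements of the adjacent face angles), under which $E'(0)=\|v_{H_i}\|\bigl(\cos\gamma_{i,j}+\cos\gamma_{i,k}-1\bigr)$ and the identity $\sum_{i,j}\gamma_{i,j}-\pi=\sum_i\delta_i$ both hold, whereas the paper remeasures $\gamma$ against the new perturbation direction and writes $E'(0)=\|v_{H_1}\|\bigl(-1-\cos\gamma_{1,2}-\cos\gamma_{1,3}\bigr)$; the two conventions differ by $\gamma\mapsto\pi-\gamma$ and give the same conclusion, and yours is in fact the one consistent with the paper's own tightness remark at $\pi/3$.
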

\begin{proof}
As we did for the exposed triangle we note that all vertices must be of degree $3$, then we'll apply the perturbation analogous to \ref{SemiExposedFace} along any edge. To say this explicitly, as in \ref{FirstNegative}: taking $U_F\cap P^C$ as an intersection of halfspaces we take the halfspace along $F$ and rotate it away from $P$ along a designated edge of the triangle and again take the complement. Replacing $U_F \cap P$, with our rotation appropriately chosen depending on $t$, gives us a perturbation $\mathcal{P}$ with the following properties:
$$V'(0)>0 \text{ and } E'(0)=-||v_{H_1}|| -<v_{H_{1}},u_{1,H_1,0}+u_{2,H_1,0}>=||v_{H_{1}}||(-1-\cos(\gamma_{1,2})-\cos(\gamma_{1,3}))$$
Here $\gamma_{1,2}$ is the angle between $v_{H_1}$ and the line from $H_1$ to $H_2$. Should $M'(0)$ always be positive we need $E'(0)$ to be positive. As in \ref{FirstNegative} we sum over all vertices to find the angle deficit:

$$\sum_{i,j \in\{ 1,2,3\}}-cos(\gamma_{i,j})>3 \text{ and } \sum_{i,j \in\{ 1,2,3\}}\gamma_{i,j}-\pi<\pi$$

With the inequality tight as all $\gamma$ approach $\pi/3$. Thus if the associated angle deficit is greater than or equal to $\pi$ one of the aforementioned $M$ satisfy $M'(0)<0$.
\end{proof}
It's worth noticing that when manipulating exposed triangular faces we were able to perturb a single vertex to induce a face, of course any three vertices lay on a plane. This is unfortunately not the case for faces of higher degree, our perturbations are more restrictive and the geometric conditions we are able to impose on quadrilaterals are significantly weaker.\\ 

\subsection{Quadrilateral Faces}

We'll state at the forefront that our first dihedral angle bound is nothing special and geometrically obvious, however it will come up during our treatment of quadrilateral faces. In semblance with our other bounds we'll start by proving an easy dihedral angle bound and then generalize to a slightly stronger one. The first dihedral angle bound we'll discuss is one that is geometrically clear; if $P$ is a convex polyhedron with small Melzak ratio then two adjacent faces cannot have a small dihedral angle between them. We will however use the existence of a generalized dihedral angle bound so it is prudent to offer a proof.\\

To introduce some vocabulary we'll use along the way, if $P$ is a convex polyhedron then $P$ can be written as the (non-trivial) intersection of some halfspaces $\bigcap S_i$, we'll call the associated planes the limiting planes of $P$ at face $F_i$.

\begin{claim}
    \label{DihedralBound}If $P$ is convex with $M(P)\leq B$ then the dihedral angle between two adjacent faces of $P$ is at least $2arctan(\frac{27}{4B^3})$.
\end{claim}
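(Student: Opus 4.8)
The plan is to make precise the heuristic that a small dihedral angle squeezes $P$ into a thin wedge, forcing a large edge-to-volume ratio. Write $t=\tan(\phi/2)$, where $\phi$ is the dihedral angle at the edge $e$ shared by the two adjacent faces $F_1,F_2$, and recall $M(P)=e(P)^3/v(P)$. Since the Melzak ratio is scale invariant it suffices to prove the per-polyhedron inequality $t\ge \tfrac{27}{4M(P)}$, i.e. $\phi\ge 2\arctan\!\big(\tfrac{27}{4M(P)}\big)$. Indeed, under the hypothesis $M(P)\le B$ the monotonicity of $\arctan$ gives $\phi\ge 2\arctan\!\big(\tfrac{27}{4B}\big)\ge 2\arctan\!\big(\tfrac{27}{4B^3}\big)$, the last step being valid because a non-vacuous hypothesis forces $B\ge M(P)>1$ (the Melzak ratio of any convex polyhedron exceeds $1$). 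Writing $P=\bigcap_i S_i$ via its limiting halfspaces, $P$ lies in the wedge $W=S_1\cap S_2$ whose apex line carries $e$ and whose opening angle is exactly $\phi$; I set coordinates with this apex line as the $y$-axis and the interior bisector as the positive $x$-axis, so that $W=\{\,|z|\le x\tan(\phi/2)\,\}$.

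Next I would bound the volume from above. Let $w$ be the transverse reach of $P$ (the largest $x$-coordinate attained on $P$) and let $\lambda$ be the extent of $P$ in the $y$-direction. For each fixed $y$ the cross-section of $P$ lies in the triangle $\{\,0\le x\le w,\ |z|\le xt\,\}$, of area $w^2t$, so integrating over the $y$-interval of length $\lambda$ yields
\[
v(P)\ \le\ \lambda\,w^{2}\,t .
\]

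The remaining ingredient is a matching lower bound for the edge length, $e(P)\ge \lambda+w$. Here I would use convexity: the $1$-skeleton of a convex polytope is connected (indeed $3$-connected), so the two $y$-extreme vertices are joined by an edge-path of length at least $\lambda$, while a vertex realizing the transverse reach is joined to $e$ by an edge-path of length at least $w$; arranging these to be essentially edge-disjoint (or, equivalently, accounting for the boundary via $e(P)=\tfrac12\sum_F\operatorname{per}(F)\ge\tfrac12(\operatorname{per}(F_1)+\operatorname{per}(F_2))$) produces the bound. Granting it, the asymmetric AM–GM inequality applied to $\lambda,\tfrac{w}{2},\tfrac{w}{2}$ gives $(\lambda+w)^3\ge \tfrac{27}{4}\lambda w^2$, whence
\[
M(P)=\frac{e(P)^{3}}{v(P)}\ \ge\ \frac{(\lambda+w)^{3}}{\lambda w^{2} t}\ \ge\ \frac{27}{4t},
\]
with the extremum approached at $w=2\lambda$. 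This is exactly $t\ge\tfrac{27}{4M(P)}$, the desired dihedral bound.

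I expect the edge-length estimate $e(P)\ge\lambda+w$ to be the only genuinely delicate point: the volume estimate and the AM–GM optimization are routine, but converting ``$P$ reaches transverse distance $w$ and longitudinal extent $\lambda$'' into a guaranteed, non-double-counted contribution to the total edge length requires care with the skeleton's combinatorics, or a clean perimeter accounting for $F_1$ and $F_2$. Should a fully disjoint pair of paths not always be extractable, the fallback is the cruder but unconditional pair $\lambda\le e(P)$ and $w\le e(P)$, which gives $v(P)\le e(P)^3 t$ and hence $M(P)\ge\cot(\phi/2)$; this already implies the stated inequality for every admissible $B\ge\tfrac{3\sqrt3}{2}$ (and all relevant $B$ are far larger), at the cost only of the clean constant $\tfrac{27}{4}$.
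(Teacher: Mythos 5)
Your proof is correct and is essentially the paper's own argument: the same wedge coordinates along the shared edge, the same triangular cross-section volume bound $v(P)\le \lambda w^{2}\tan(\phi/2)$, the same edge-length estimate $e(P)\ge \lambda+w$ (which the paper simply asserts as ``immediately clear''), and the same AM--GM optimization of $\lambda w^{2}$ subject to $\lambda+w\le e(P)$. The only difference is bookkeeping: by carrying the scale-invariant ratio $M(P)$ throughout rather than normalizing $v(P)=1$ and then replacing $E(P)\le B^{1/3}$ by the lossier $E(P)\le B$ as the paper does, you obtain the sharper bound $\tan(\phi/2)\ge \frac{27}{4B}$, which implies the stated $2\arctan\!\left(\frac{27}{4B^{3}}\right)$ in every non-vacuous case ($B\ge 1$), and which in fact matches the scaling the paper itself uses in its numerical remark following the claim.
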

\begin{proof}
Since $P$ is convex we can take the limiting planes of two adjacent faces with minimal dihedral angle, $\alpha$, between them. By choice of coordinates we can let the intersection between these two planes lie on $\mathbb{R}\times\{0\}\times \{0\}$ and let $\{0\}\times\mathbb{R}\times\{0\}$ be the orthogonal axis that bisects the dihedral angle. No we normalize s.t. $P$ has volume $1$ and let $W_1,W_2,W_3$ be the maximal differences among points of $P$ in each of those axes.  The following facts are immediately clear:\\
\begin{equation}
    \frac{W_1W_2W_3}{2} \geq 1 \qquad \frac{W_3}{2W_2}\leq tan\left(\frac{\alpha}{2}\right) \qquad W_1+W_2 \leq E(P)
\nonumber \end{equation}
Taken together give $W_1 W_2^2 tan\left(\frac{\alpha}{2}\right) \geq 1$ and $W_1+W_2 \leq B$. The l.h.s. of the former inequality is maximized when $W_2 = \frac{2}{3} B $ and $W_1 = \frac{1}{3}B$, giving $B^3 tan\left(\frac{\alpha}{2}\right) \geq \frac{27}{4}$.
\end{proof}
Note that for any elements of the minimizing sequence detailed in \ref{MinimizingSequence} we get that a dihedral angle lower bound of $2arctan(\frac{1}{8\cdot 3^{5/2}}) \approx 0.016$, nothing illuminating, but importantly positive.\\

The more important dihedral angle bound is one where the two faces are not adjacent, but merely close. To justify it a little; the possible event we want to control is if our minimizing sequence doesn't find a fixed point. That would call for the number of faces going to infinity, the size of the faces would decay rapidly (see \cite{berger_2002} for more details) and these small faces would have neighboring faces that were close to one another. Broadening the dihedral angle bound therefore gives us a control on the dihedral angle between all limiting planes surrounding almost all faces in the scenario we want to control.

\begin{lemma}
\label{HelpfulDihedral} Let $P$ be a convex polyhedron with $M(P)\leq B$. Taking two limiting planes of $P$ with faces at most $d$ apart along a shared face, the dihedral angle between the planes is bounded from below by a non-zero constant increasing in $d$ and depending on $B$.
\end{lemma}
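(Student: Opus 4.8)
The plan is to run the volume-in-a-wedge estimate of \ref{DihedralBound}, now for a non-adjacent pair of limiting planes. Write $\Pi_1,\Pi_2$ for the two limiting planes, $F_1,F_2$ for their faces, $\alpha$ for the dihedral angle between the planes, and $\ell=\Pi_1\cap\Pi_2$ for the line in which they meet. By convexity $P$ is contained in the wedge $S_1\cap S_2$ with edge $\ell$ and opening angle $\alpha$. Normalizing $v(P)=1$ and choosing coordinates adapted to the wedge exactly as in \ref{DihedralBound} --- with $W_1$ the extent of $P$ along $\ell$ and $R$ the largest distance from $P$ to $\ell$ --- one gets the volume bound $1=v(P)\le \tfrac12\,\alpha\,R^2\,W_1$ together with $W_1\le E(P)\le B^{1/3}$.

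The step that has no analogue in the adjacent case is the control of where $\ell$ sits relative to $P$: if $\Pi_1,\Pi_2$ are nearly parallel but $P$ lies far from $\ell$, the wedge is wide throughout $P$ and a small $\alpha$ is harmless. This is exactly where the shared face $F$ and the separation $d$ are meant to enter. Since $F_1\subset\Pi_1$ and $F_2\subset\Pi_2$ are both incident to $F$, the edges $e_1=F\cap F_1$ and $e_2=F\cap F_2$ lie in $\Pi_1$ and $\Pi_2$ and are at most $d$ apart. I would project orthogonally to $\ell$: the plane of $F$ then cuts the cross-sectional wedge in a chord whose endpoints lie on the two walls and are at most $d$ apart, which pins the chord --- and hence the part of $P$ adjacent to $F$ --- to a controlled radius about the apex. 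Feeding the resulting bound on $R$ into $\tfrac12\alpha R^2 W_1\ge 1$ and then carrying out the same elementary optimization as at the end of \ref{DihedralBound} (there $W_2=\tfrac23 B$, $W_1=\tfrac13 B$) should produce the asserted lower bound on $\alpha$ as a function of $d$ and $B$.

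The hard part is precisely this apex-location step, i.e. converting ``$e_1,e_2$ at most $d$ apart'' into a genuine constraint on $R$ and checking that the dependence comes out monotone in $d$. The extreme obstruction to keep in mind is a slab-like configuration --- two far-apart, nearly parallel faces, as on a long box --- which shows that the shared-face hypothesis must be used \emph{decisively} rather than incidentally: it is the planarity of $F$ together with the $d$-bound on $e_1,e_2$ that forbids the plane of $F$ from crossing the wedge arbitrarily far from $\ell$. Once $R$ is bounded in terms of $d$ and $\alpha$ the remainder is routine, so I would budget essentially all the effort for this geometric bookkeeping and for separating the case in which $P$ lies on the apex side of $F$ from the case in which it lies beyond it. As a robustness check, and a possible alternative route that makes the $d$-dependence transparent, one can instead chain the adjacent bound \ref{DihedralBound} across the consecutive faces separating $F_1$ from $F_2$ along $\partial F$, accumulating a lower bound on $\alpha$ that grows with the number of intervening faces and hence with $d$.
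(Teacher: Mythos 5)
You correctly identify both the overall scheme (trap $P$ in the wedge of the two limiting planes and play volume against edge length, as in \ref{DihedralBound}) and the one genuinely new ingredient needed (using the $d$-hypothesis on the shared face to control where the apex line $\ell$ sits relative to $P$); this is exactly the paper's plan. But the quantitative vessel you propose for that ingredient cannot carry it. The $d$-hypothesis pins points of $P$ on the two walls to distance roughly $\frac{d}{2\sin(\alpha/2)}$ from $\ell$, so the best bound available on your $R$ (the \emph{largest} distance from $P$ to $\ell$) has the form $R \le \frac{d}{2\sin(\alpha/2)} + \operatorname{diam}(P)$. Feeding this into the sector estimate $1 \le \tfrac12 \alpha R^2 W_1$ makes the right-hand side of order $\frac{d^2}{2\alpha}W_1$, which tends to infinity as $\alpha \to 0$: the inequality is then satisfied by every small $\alpha$ and yields no lower bound at all. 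The sector bound is quadratic in the radius, which is precisely the wrong shape when the admissible radius scales like $1/\alpha$. The paper's proof avoids this by splitting the apex-facing direction into a \emph{minimum} distance $W_2'$ from $P$ to the apex plane and the \emph{extent} $W_2$ of $P$: the shared face gives $W_2' \le \frac{d}{2\tan(\alpha/2)}$, the edge-length bound gives $W_2 \le B$, and the width of the wedge across $P$ is then at most $2(W_2+W_2')\tan\left(\frac{\alpha}{2}\right) \le 2W_2\tan\left(\frac{\alpha}{2}\right) + d$, so that the $\tan(\alpha/2)$ cancels the $1/\tan(\alpha/2)$ and $d$ enters \emph{additively} rather than as $d^2/\alpha$. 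The box bound $W_1W_2W_3\ge 1$ then closes the argument, giving $\frac{4}{27}B^3\tan\left(\frac{\alpha}{2}\right)\geq \frac{1}{2}-\frac{1}{4}dB^2$. (Note this bound is \emph{decreasing} in $d$ and non-vacuous only for $d$ small, the word ``increasing'' in the statement notwithstanding; your attempt to make the dependence come out growing in $d$ is chasing something that is geometrically impossible, since enlarging $d$ only weakens the hypothesis, and a thin slab shows no bound survives for large $d$.) Your annulus-flavored remark about the ``part of $P$ adjacent to $F$'' is the right instinct, but to repair your proof you must replace $\tfrac12\alpha R^2$ by the area of the annular sector between radii $r_{\min}$ and $r_{\min}+\operatorname{diam}(P)$, which is exactly the paper's $W_2'$-versus-$W_2$ split in polar form.

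Your fallback of chaining \ref{DihedralBound} across the intervening faces along $\partial F$ also fails, for two independent reasons. First, lower bounds on consecutive dihedral angles do not compose: a dihedral angle at least $\epsilon$ between adjacent faces only says the outward normals turn by at most $\pi-\epsilon$ at each step, and after two or more steps the accumulated turning can exceed $\pi$, so nothing prevents the outer pair of planes from being parallel or worse (opposite faces of a long box: every adjacent dihedral angle is $\pi/2$, yet the outer pair meet at angle $0$). Second, the number of intervening faces is not controlled by $d$ in either direction --- arbitrarily many tiny faces fit within any fixed $d$ --- so even if the chaining produced a bound, it would not be a function of $d$.
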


\begin{proof}
  The proof starts the same, organizing the axes and defining our $W_\cdot$ except we'll define $W'_2$ the distance from $\mathbb{R}\times\{0\}\times\mathbb{R}$ to $P$. Here we see that $W'_2 \leq \frac{d}{2tan\left(\frac{\alpha}{2}\right)}$. The analogous inequalities to those used in \ref{DihedralBound} are the following:
  \begin{equation}
    W_1W_2W_3 \geq 1 \qquad \frac{W_3}{2(W_2+W'_2)}\leq tan\left(\frac{\alpha}{2}\right) \qquad W_1+W_2 \leq E(P)
\nonumber \end{equation}
Rearranging as earlier gives us $(W_1W_2^2+W_1W_2W'_2)tan\left(\frac{\alpha}{2}\right) \geq \frac{1}{2}$ and $W_1+W_2 \leq B$. Optimizing each summand with the constraint of the latter inequality then gives us:
$$ \frac{4}{27}B^3tan\left(\frac{\alpha}{2}\right)\geq \frac{1}{2}-\frac{1}{4}dB^2$$
From here one can isolate $\alpha \geq C_{B,d}$ as desired.
\end{proof}

From here we are able to work towards our first, and simplest quadrilateral bound, on rectangular faces of convex polyhedrons. In principle we'll apply a similar perturbation as we did in the proof of \ref{FirstDeficitBound}, fixing two adjacent vertices and rotating the limiting plane away from the polyhedron, and again noting that any perturbation that trivially increases volume is a sum of such perturbations. There are a handful of complications here, first and foremost the fact that an exposed quadrilateral face is not determined uniquely by the image of neighboring faces under the Gauss map.\\

  First let's extract the critical part of the proof \ref{FirstDeficitBound} and apply it to quadrilateral faces. This is the condition to get non-increasing edge-length under the perturbation described above. A helpful observation is that if $P=\bigcap S_i$ is convex with a rectangular face $F_j$ then $P^*_j = \bigcap_{i\neq j} S_i$ is $P$ union a wedge. We'll call this the protruding polyhedron, $P'_j$, (or here protruding wedge) at $F_j$.

  \begin{claim}\label{QuadPerturbCond}Let $P$ be a polyhedron with exposed quadrilateral face $F$ w. vertices $H_\cdot$ and vector $v_{H_i}$ as the non-base edge from $H_i$ in the protruding polyhedron at $F$. We can perturb $P$ to get a small Melzak ratio if the following is non-positive:
$$ ||v_{H_1}||-<v_{H_{1}},u_{1,H_1}+u_{2,H_1}>+||v_{H_1}||-<v_{H_{2}},u_{1,H_1}+u_{2,H_2}>$$
  \end{claim}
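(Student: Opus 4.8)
The plan is to run the rotational perturbation of \ref{SemiExposedFace} and \ref{FirstDeficitBound}, now over a quadrilateral base. Write the cyclically ordered vertices of $F$ as $H_1,H_2,H_3,H_4$ and let $\mathcal{P}$ rotate the limiting plane of $F$ outward about the edge $H_3H_4$, holding $H_3$ and $H_4$ fixed. Because $F$ is exposed its vertices have degree $3$ (the corollary to \ref{ExposedFace}), so the two faces of $P$ meeting $F$ at $H_1$ intersect in a single line — precisely the non-base edge at $H_1$ in the protruding polyhedron — and $H_1$, forced to remain on both of these fixed planes, can only move along that line; the same holds at $H_2$. Thus the first-order displacements $v_{H_1},v_{H_2}$ point along the prescribed non-base edges while $H_3,H_4$ stay put, and since the plane sweeps strictly outward the added region lies outside $P$ with first-order content $\int_F(\text{normal speed})\,dA>0$, giving $V'(0)>0$.

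Next I would tally $E'(0)$ vertex by vertex, as in \ref{FirstDeficitBound}. The only edges that change length are those incident to $H_1$ or $H_2$: the two non-base edges, the two on-$F$ edges $H_4H_1$ and $H_2H_3$ (each with a fixed endpoint), and the shared edge $H_1H_2$. A vertex sliding along its own non-base edge contributes $+\|v_{H_i}\|$, and an on-$F$ edge at $H_i$ changes at rate $-\langle v_{H_i},u\rangle$ with $u$ the unit vector from $H_i$ toward its neighbor. The one feature absent from the triangular case is that both endpoints of $H_1H_2$ now move; but $\tfrac{d}{dt}|H_1H_2|$ splits additively as $-\langle v_{H_1},u_{1,H_1}\rangle-\langle v_{H_2},u_{1,H_2}\rangle$, so charging each summand to the vertex that produces it gives
$$E'(0)=\Big(\|v_{H_1}\|-\langle v_{H_1},u_{1,H_1}+u_{2,H_1}\rangle\Big)+\Big(\|v_{H_2}\|-\langle v_{H_2},u_{1,H_2}+u_{2,H_2}\rangle\Big),$$
which is exactly the quantity in the statement.

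Finally, substituting into $M'(0)=\tfrac{3E(0)^2}{V(0)}E'(0)-\tfrac{E(0)^3}{V(0)^2}V'(0)$, the hypothesis that this quantity is non-positive makes the first term $\le 0$, while $V'(0)>0$ makes the second term strictly negative, so $M'(0)<0$ and $\mathcal{P}$ lowers the Melzak ratio. I expect the only delicate point to be the additive splitting of the shared edge $H_1H_2$ (which has no analogue in the triangular argument), together with the routine check that $\mathcal{P}$ remains an admissible polyhedral perturbation for small $t$ — the combinatorics of $F$ and the degree-$3$ condition persist by exposedness.
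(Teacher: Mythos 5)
Your construction --- rotating the limiting plane of $F$ outward about the edge $H_3H_4$, tallying $E'(0)$ vertex by vertex (your additive splitting of the shared edge $H_1H_2$ is correct), and using $V'(0)>0$ --- is exactly the perturbation the paper describes in the paragraphs preceding the claim; the paper supplies no further proof of this claim, so in spirit you have reconstructed the intended argument. One small repair: exposedness does not by itself force degree-$3$ vertices (the corollary you cite is about the minimizing sequence); rather, if some vertex of $F$ has degree $>3$ then \ref{ExposedFace} already yields the conclusion, so you may assume degree $3$ without loss of generality.

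However, there is a genuine gap at the point where you identify the first-order displacements of $H_1,H_2$ with the vectors $v_{H_i}$ of the statement. The claim (and its later use in Remark \ref{CleanCond}, where $|p_i|$ is the full length of the edge to the apex) defines $v_{H_i}$ as the actual non-base edge of the protruding wedge, whereas the rotation about $H_3H_4$ moves $H_i$ with velocity $(d_i/h_i)\,v_{H_i}$, where $d_i$ is the distance from $H_i$ to the rotation axis and $h_i$ is the height of the wedge vertex reached by the non-base edge from $H_i$. Writing $T_i=\|v_{H_i}\|-\langle v_{H_i},u_{1,H_i}+u_{2,H_i}\rangle$, which is positively homogeneous of degree one in $v_{H_i}$, your perturbation therefore gives
$$E'(0)=\tfrac{d_1}{h_1}\,T_1+\tfrac{d_2}{h_2}\,T_2,$$
not $T_1+T_2$. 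In the triangular case (\ref{FirstDeficitBound}) only one vertex moves, so homogeneity makes the normalization irrelevant; with two moving vertices it matters: if $T_1$ and $T_2$ have opposite signs, non-positivity of $T_1+T_2$ does not imply non-positivity of the weighted sum, and the $V'(0)$ term cannot rescue the argument in the intended application to arbitrarily small faces, since there $V'(0)$ scales like $\operatorname{diam}(F)^2$ against $E'(0)\sim\operatorname{diam}(F)$. The weights are genuinely unequal even for good wedges: for the base $H_1=(2,1)$, $H_2=(0,3)$, $H_3=(0,0)$, $H_4=(2,0)$ with face planes $z=3-x-y$, $z=2x$, $z=y$, $z=2-x$ over the respective edges, one finds top vertices $(1,1,1)$ and $(3/5,6/5,6/5)$, hence $d_1/h_1=1$ while $d_2/h_2=5/2$. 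So your proof establishes the claim only under the stronger hypothesis that the weighted sum is non-positive (equivalently, if $v_{H_i}$ is reinterpreted as the velocity of $H_i$ rather than the wedge edge); to prove the statement as written, the discrepancy between the two normalizations must be addressed --- a defect that, to be fair, is inherited from the claim itself, which the paper states without proof.
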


  \begin{lemma}\label{WedgeBound}
      Let $P'$ be the wedge with all acute dihedral angles along some rectangular base (call this a good wedge) and longest edge along the base of length $1$. Also give $P'$ a lower bound on dihedral angle. The curvature at the vertices away from the base is bounded from below by a nonzero constant depending only on the dihedral angle bound and the height of $P'$. 
  \end{lemma}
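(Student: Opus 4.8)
The plan is to set up explicit coordinates for the wedge, reduce the notion of "curvature" to the angle deficit at the two ridge endpoints, and then run a compactness argument on the family of admissible good wedges. Place the rectangular base in the plane $z=0$ with corners $A=(0,0,0)$, $B=(L,0,0)$, $C=(L,w,0)$, $D=(0,w,0)$, and the ridge $EF$ parallel to $AB$ at height $h$, with $E=(x_E,y_E,h)$ and $F=(x_F,y_E,h)$, $x_E<x_F$. The four base dihedral angles are then $\arctan(h/x_E)$, $\arctan(h/(L-x_F))$, $\arctan(h/y_E)$, $\arctan(h/(w-y_E))$; all lie in $(0,\pi/2)$ exactly because the wedge is good (acute), while the dihedral lower bound $\theta_0$ gives $\cot\theta\le\cot\theta_0$ and hence confines each offset $h\cot\theta\in\{x_E,\,L-x_F,\,y_E,\,w-y_E\}$ to the bounded range $(0,h\cot\theta_0]$, and $\max(L,w)=1$ fixes the scale (both ridge orientations are covered by which of $L,w$ equals $1$). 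The vertices away from the base are $E$ and $F$; by the reflection $x\mapsto L-x$, which swaps $E\leftrightarrow F$ and carries one admissible wedge to another, it suffices to bound the angle deficit $\delta_E$ over the whole family.

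At $E$ exactly three faces meet, the end triangle $ADE$ and the two slanted quadrilaterals $ABFE$, $DCFE$, so $E$ is a trihedral (degree-$3$) vertex and its curvature is the angle deficit $\delta_E=2\pi-\angle(\vec{EA},\vec{EF})-\angle(\vec{ED},\vec{EF})-\angle(\vec{EA},\vec{ED})$, equivalently the area of its Gauss image by the remarks above. The structural point I would isolate first is that each of these three angles is an angle between edge directions and therefore depends only on $\widehat{EA},\widehat{ED},\widehat{EF}$, not on the edge lengths. In particular $\delta_E$ is scale invariant and insensitive to the ridge length $x_F-x_E$ (whose direction is constantly $+x$), so collapsing the ridge does not drive $\delta_E$ to $0$.

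Now $\delta_E$ is a continuous function of $(L,w,x_E,y_E)$ on the bounded admissible parameter set; pass to its closure $K$, which is compact. On $K$ the three edge directions are coplanar iff $\det[\vec{EF},\vec{EA},\vec{ED}]=(x_F-x_E)\,h\,w=0$, i.e. iff $w=0$, since $h>0$ is fixed and $x_F>x_E$ on the interior. Hence $\delta_E>0$ on the interior, and the only boundary stratum where two directions merge is $w\to0$: there $\widehat{EA},\widehat{ED}\to\widehat{(-x_E,0,-h)}$ and $\delta_E\to 2\pi-2\beta$ with $\beta=\arccos\!\big(-x_E/\sqrt{x_E^2+h^2}\big)$, which stays strictly below $\pi$ because $x_E\le h\cot\theta_0$ and $h$ is fixed. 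Thus $\delta_E$ extends continuously to $K$ and is strictly positive there, so it attains a positive minimum $C=C(\theta_0,h)>0$; the same bound holds at $F$ by the reflection symmetry.

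The subtle point, and the only place real care is needed, is the behaviour on the degenerate boundary of the parameter set, where a naive "solid angle shrinks to zero" intuition would suggest the curvature could vanish (the base thinning to a segment, a base dihedral angle approaching $\pi/2$, or the ridge collapsing to a point). I would stress that these are red herrings for the angle deficit: because $\delta_E$ sees only edge directions, the ridge collapse is harmless, and the genuine flattening, namely $\widehat{EA}$ aligning with the base so that $\beta\to\pi$, is exactly what the fixed positive height $h$ together with the dihedral lower bound $\theta_0$ rule out. Establishing continuity of $\delta_E$ across the $w\to0$ stratum, where the spherical triangle of directions degenerates, and confirming that no other stratum produces a coplanar flat corner, is the heart of the matter; everything else is the standard principle that a continuous strictly positive function on a compact set has a positive minimum.
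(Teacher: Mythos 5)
Your proof is correct, but it follows a genuinely different route from the paper's. The paper argues entirely on the image of the Gauss map: taking the negated base normal as zenith, it observes that the fixed height bounds the polar angles of the four slanted face normals away from zero, and then runs a proof by contradiction with a case analysis of how those normals could cluster if the spherical polygon $g(H'_1)$ were arbitrarily small (all close in polar angle, or pairs close in azimuthal angle); each clustering pattern forces two opposite base edges to become nearly parallel with their limiting planes at small dihedral angle, and the assumed dihedral lower bound (the one later supplied by Lemma \ref{HelpfulDihedral} in the corollary) kills each case. You instead parametrize good wedges in explicit coordinates, reduce the curvature at a ridge endpoint to the angle deficit of a trihedral vertex, observe that this deficit depends only on the three edge directions (hence is blind to ridge length and scale), and run a compactness argument over the closed, bounded parameter set: positivity in the interior follows from non-coplanarity of $\widehat{EA},\widehat{ED},\widehat{EF}$ (your determinant $hw \neq 0$), and on the single genuinely degenerate stratum $w=0$ the hypotheses give the explicit bound $\delta_E \to 2\pi - 2\beta \ge 2\theta_0$. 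Your route buys several things: it is elementary and self-contained (no spherical case analysis, and the dihedral hypothesis is needed only for the four base angles, a formally weaker assumption than the paper's proof appears to use), it makes precise exactly where the fixed height and the dihedral bound enter (both are needed to keep $\beta$ away from $\pi$ and the parameter set compact), and it replaces the paper's avowedly qualitative sketch with a complete argument. The paper's route buys consistency with the Gauss-map formalism used throughout the section and avoids coordinate bookkeeping. One small gap you should close in your write-up: you assert, rather than prove, that the ridge of a good wedge is horizontal and parallel to a pair of base sides with both endpoints at the common height $h$; this is true (each slanted plane contains an entire base edge, so opposite planes meet in horizontal lines parallel to those edges, and the lower of the two intersection lines is the ridge), but the entire parametrization rests on it, so it deserves a sentence of justification.
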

  \begin{proof}
     Fix a height for $P'$, we will show that the spherical quadrilateral induced by the image of the four limiting planes of $P'$ away from the base cannot be arbitrarily small. While we'll proceed with the proof more qualitatively it'll be helpful to name the faces of the wedge $F'_\cdot$, associated edges along the base $L'_\cdot$, and the two top vertices $H'_\cdot$ as illustrated.\\
     \begin{center}
\includegraphics[scale=1]{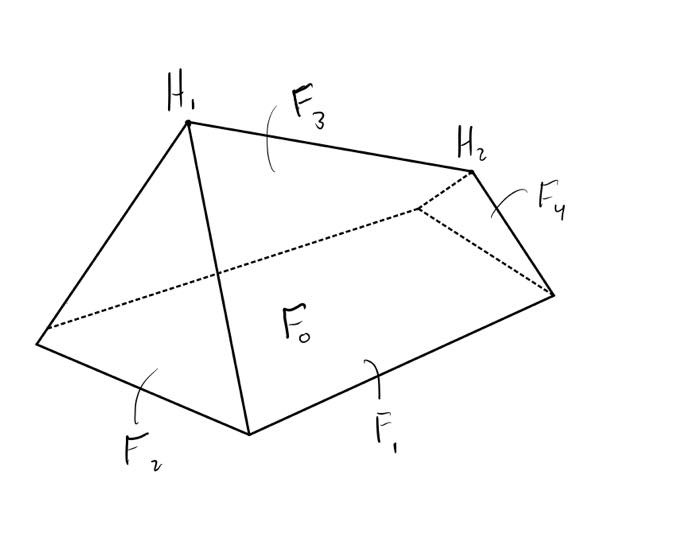}    \label{WedgeLabels}
\end{center}
     Let us describe the image of the Gauss map in spherical coordinates with $-g(F'_0)$ as the zenith. The fixed height bounds the polar angle of $g(F'_i)$ from below for $i\neq 4$. If $g(H'_1)$ is to be arbitrarily small this means that these $g(F'_i)$ must either all be arbitrarily close to one another in polar angle, or one pair arbitrarily close in azimuthal angle and the third arbitrarily close in azimuthal angle up to a shift of $\pi$ .\\

     In the former case by asking $g(H'_1)+g(H'_2)$ to be small enough we can ensure that $L'_2$ and $L'_4$ are as close to parallel as we wish. Applying the dihedral angle bound to $F'_2$ and $F'_4$ then creates an obstruction to the existence of such a quadrilateral face.\\

     The latter case is marginally more delicate. If $g(F'_2)$ is the distant lone vertex this reduces to the former case. Otherwise the fact that an edge of $g(H'_1)$ can be chosen as close to the zenith as we wish means that the edge between $H'_1$ and $H'_2$ can have as low a slope as we wish, so the height of $H'_2$ is bounded from below and thus the polar angle of $g(F'_4)$ is bounded from below so it must be close to another vertex in azimuthal angle. This calls for two subcases.\\

     If there are two pairs arbitrarily close in azimuthal angle then by asking $g(H'_1)+g(H'_2)$ to be small enough we can ensure that $L'_2$ and $L'_4$ are as close to parallel as we wish. Applying the dihedral angle bound to $F'_2$ and $F'_4$ then creates an obstruction to the existence of such a quadrilateral face.\\

     If $g(F'_1),g(F'_2),g(F'_4)$ can be chosen arbitrarily close in azimuthal angle then applying same strategy to $F'_1$ and $F'_3$ creates an obstruction to the existence of such a quadrilateral face. 
  \end{proof}

  Note the previous lemma is almost certainly already known, or perhaps entirely obvious to those the geometrically inclined, but we did not see it in the introductory texts so we prove it here. Next we want to analyze these protruding polyhedrons. We are happy with any finite bound on the number of these faces on a minimizer to a convex Melzak problem so we limit ourselves to considering the protruding polyhedrons at faces with some arbitrarily small size and associated curvature.

  \begin{corollary} \
      For any given $h,B\in \mathbb{R}$ there exists $C_{h,B}\in \mathbb{N}$ such that for any convex polyhedron $M(P)<B$ then $P$ has at most $C_{h,B}$ quadrilateral faces where the height of the protruding normalized good wedge is greater than $h$.
  \end{corollary}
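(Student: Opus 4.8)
The plan is to charge a definite amount of Gaussian curvature to each counted quadrilateral face and then invoke the fact that the total angle deficit of a convex polyhedron is $4\pi$. Fix $h$ and $B$. Since $M(P)<B$, Claim~\ref{DihedralBound} gives a uniform lower bound $\delta=2\arctan\!\big(\tfrac{27}{4B^{3}}\big)>0$ on every dihedral angle of $P$, and in particular on the dihedral angles along the base of each protruding good wedge. For a counted face $F_j$ the normalized good wedge therefore has longest base edge $1$, height exceeding $h$, and all base dihedral angles in $[\delta,\pi/2)$; Lemma~\ref{WedgeBound} then produces a constant $\kappa_0=\kappa_0(h,B)>0$ bounding from below the curvature at each of the two apex vertices of the wedge.

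Next I would pass from the wedge back to $P$ through the Gauss image. The two apex vertices of the wedge are built from the same four limiting planes that border $F_j$ in $P$; their two spherical Gauss images are geodesic triangles sharing the ridge diagonal, and their union is exactly the spherical quadrilateral $Q_j$ spanned by the four adjacent outward normals $n_1',\dots,n_4'$. Hence $\operatorname{Area}(Q_j)$ equals the sum of the two apex curvatures, so $\operatorname{Area}(Q_j)\ge 2\kappa_0$. In $P$ the normal $n_j$ of $F_j$ lies in the interior of $Q_j$, and the fan triangulation of $Q_j$ from $n_j$ has its four triangles $T_{j,v}=n_j n_a' n_b'$ (one for each vertex $v$ of $F_j$, where $n_a',n_b'$ are the normals of the two faces meeting $F_j$ at $v$) each equal to the ear at $n_j$ of the convex spherical Gauss polygon of $v$; by convexity $T_{j,v}$ lies inside that Gauss polygon. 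As the vertex Gauss images of $P$ are disjoint and their areas are exactly the angle deficits, this yields $\sum_{v\in F_j}\kappa(v)\ge \operatorname{Area}(Q_j)\ge 2\kappa_0$, a bound valid at vertices of any degree.

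Finally I would sum this inequality over all counted faces. Writing $N$ for their number, $\sum_{F_j}\sum_{v\in F_j}\kappa(v)\ge 2N\kappa_0$, while reindexing by vertices rewrites the left side as $\sum_{v}\kappa(v)\,n(v)$, where $n(v)$ counts the counted faces at $v$. On the minimizing candidates of Claim~\ref{MinimizingSequence} every vertex has degree $3$, so $n(v)\le 3$ and the polyhedral Gauss--Bonnet theorem \cite{GaussBonnet} (total deficit $4\pi$) gives $2N\kappa_0\le 12\pi$, i.e. $N\le 6\pi/\kappa_0=:C_{h,B}$. For an arbitrary convex $P$ the same conclusion follows once the crude bound $n(v)\le\deg(v)$ is replaced by the sharper observation that the ear triangles $T_{j,v}$ sitting inside a single Gauss polygon overlap with bounded multiplicity, so that $\sum_v(\text{ear areas})\le m\cdot 4\pi$ for an absolute constant $m$.

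The main obstacle is precisely this last multiplicity control in the high-degree case: one must show that within a fixed convex spherical polygon the family of corner (ear) triangles covers each point a number of times bounded independently of the number of sides. For the intended application --- the degree-$3$ minimizing sequence --- this difficulty evaporates and the count is immediate; the only remaining care is to confirm that Lemma~\ref{WedgeBound} furnishes a single $\kappa_0$ valid for all heights $\ge h$ (the apex curvature being monotone increasing in the normalized height), so that $\kappa_0$ depends on $h$ and $B$ alone.
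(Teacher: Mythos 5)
Your overall architecture --- a dihedral-angle bound feeding Lemma \ref{WedgeBound}, then charging the resulting apex curvature against the total Gaussian curvature $4\pi$ --- is the route the paper intends (its proof is literally ``follows directly from \ref{HelpfulDihedral} and \ref{WedgeBound}''), and your Gauss-image bookkeeping (the spherical quadrilateral $Q_j$ as the union of the two apex triangles sharing the ridge diagonal, each fan/ear triangle contained by convexity in the Gauss polygon of the corresponding vertex of $F_j$) is a correct way to make the paper's implicit counting explicit. But there are two genuine gaps. First, you feed Lemma \ref{WedgeBound} with Claim \ref{DihedralBound}, which only controls angles between \emph{adjacent} faces of $P$; the paper cites Lemma \ref{HelpfulDihedral} instead, and for good reason. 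The proof of \ref{WedgeBound} applies ``the dihedral angle bound'' to the pairs $F'_2,F'_4$ and $F'_1,F'_3$ --- \emph{opposite} side planes of the wedge, whose intersection lies outside $P$, so they are not adjacent faces of $P$ and \ref{DihedralBound} says nothing about them; only \ref{HelpfulDihedral} (limiting planes at most $d$ apart along the shared face $F_j$) supplies this input. Moreover, your claim that \ref{DihedralBound} puts the wedge's base dihedral angles in $[\delta,\pi/2)$ is backwards: the wedge's base angle along an edge equals $\pi$ minus the dihedral angle of $P$ there, so \ref{DihedralBound} yields an \emph{upper} bound $\pi-\delta$ on it, while the lower bound actually used inside \ref{WedgeBound} comes from the height hypothesis, not from any dihedral bound.

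Second, the corollary quantifies over \emph{all} convex $P$ with $M(P)<B$, not over the degree-$3$ minimizing sequence of \ref{MinimizingSequence}, so the step you yourself flag as ``the main obstacle'' --- bounded overlap multiplicity of ear triangles inside a single high-degree Gauss polygon --- is not optional, and you leave it unproven; as written your argument proves a weaker statement than the one asserted. The missing step is in fact true with multiplicity at most $3$: an interior point $x$ of a convex (spherical) polygon lies in the ear at $p_i$ exactly when the boundary path $p_{i-1}p_ip_{i+1}$ subtends an angle greater than $\pi$ at $x$, and since each side of the polygon contributes to exactly two such subtended angles, these angles sum to $4\pi$, so fewer than four of them can exceed $\pi$. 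With that observation your charging argument closes for arbitrary vertex degrees, giving $2N\kappa_0\le 3\cdot 4\pi$. (One further detail glossed on both sides: the constant of \ref{HelpfulDihedral} degrades as $d$ grows and trivializes for $d$ of order $B^{-2}$, so faces of large diameter must be counted separately, e.g.\ via the bound on total edge length.)
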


\begin{proof}
    Follows directly from \ref{HelpfulDihedral} and \ref{WedgeBound}.
\end{proof}

The strategy we used for triangular faces does not prove as fruitful for quadrilateral faces. Instead we will take quadrilateral faces along of the minimizing sequence \ref{MinimizingSequence}, normalize their protruding wedge, and consider a limit of a converging subsequence. From here we'll extract data about sufficiently small quadrilateral faces. Interestingly enough this strategy isn't fruitful for triangular faces.\\

Specifically, take a subsequence of \ref{MinimizingSequence} with an increasing number of quadrilateral faces with good protruding wedges. Construct a sequence of faces by choosing one from each of these polyhedrons s.t. their size and associated curvature converge to $0$. Normalizing their protruding wedge to have the longest base edge of length $1$. The space of normalized wedges is compact so there exists a convergent subsequence with limit $P'$. Calling the convergent subsequence of wedges $P'_i$ we construct associated sequences of polyhedrons $P_i$ (a subsequence of \ref{MinimizingSequence}) and the associated faces $F_i$. 

\begin{theorem}\label{QuadProperty}
    Let the number of quadrilateral faces with good protruding wedges on elements of \ref{MinimizingSequence} be unbounded. We then know the following:\\
    (i) $P'$ is degenerate and of height $0$.\\
    (ii) The expression \ref{QuadPerturbCond} is equal to $0$ for all edges of $P'$.\\
    (iii) $P'$ is a pyramid.\\
\end{theorem}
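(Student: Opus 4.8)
The plan is to prove the three items in turn, using throughout that each $P_i$ minimizes $M$ among convex polyhedra with the same number of faces and that the Melzak ratios along \ref{MinimizingSequence} stay below a fixed bound $B$. For (i) I would argue by contraposition from \ref{WedgeBound}. By \ref{DihedralBound} every dihedral angle of $P_i$—in particular every dihedral angle along the base of the normalized wedge $P'_i$—is bounded below by a positive constant depending only on $B$. By the choice of the faces $F_i$, the associated top-vertex curvature of $P'_i$ tends to $0$. Since \ref{WedgeBound} bounds that curvature below by a positive constant that is monotone in the height (for a fixed dihedral lower bound), a vanishing curvature is incompatible with the heights staying bounded away from $0$; hence the heights tend to $0$ and the limit $P'$ is degenerate, which is (i).

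For (ii) the essential observation is that the variation behind \ref{QuadPerturbCond} is a genuine two-sided, smooth perturbation at the minimizer. By the degree-$3$ corollary following \ref{ExposedFace}, every vertex of an exposed quadrilateral face of an element of \ref{MinimizingSequence} has degree $3$; hence, when the limiting plane of $F_i$ is rotated about one of its base edges, each of the two non-fixed vertices is cut out by the tilting plane together with two fixed neighbouring planes, so it slides along the fixed line carrying its non-base edge. Thus $E_{\mathcal P}(t)$ and $V_{\mathcal P}(t)$ are differentiable through $t=0$ in both directions, and for small $|t|$ the combinatorial type, and in particular the number of faces, is preserved. Since $P_i$ minimizes $M$ within that class, $t=0$ is an interior minimum of the smooth function $M_{\mathcal P}$, so $M'_{\mathcal P}(0)=0$ and $E'(0)=\tfrac{E}{3V}V'(0)$. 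Because, as the size $s_i$ of $F_i$ shrinks, a rotation about a base edge changes the volume by an amount negligible compared with its effect on edge length, while $E(P_i)$ and $V(P_i)$ stay bounded and bounded away from $0$ after normalizing the volume, this identity forces the value of \ref{QuadPerturbCond} on the normalized wedge $P'_i$ to tend to $0$. As this holds for every base edge and \ref{QuadPerturbCond} depends continuously on the normalized wedge, passing to the limit gives that \ref{QuadPerturbCond} vanishes on every edge of $P'$, which is (ii). The subtle point is whether this continuity survives the degeneration to height $0$, and it is here that the normalization \emph{longest base edge equal to} $1$ does its work.

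For (iii) I would feed (i) and (ii) into the Gauss-image analysis of \ref{WedgeBound}. On the degenerate limit the vanishing of \ref{QuadPerturbCond} on each base edge becomes a system of angle identities at the flattened vertices, one per edge, of the same $\cos\gamma_{1}+\cos\gamma_{2}=1$ form encountered in \ref{FirstDeficitBound}. Combining these relations with the vanishing top-vertex curvatures and the acute-dihedral (good-wedge) structure, I would rerun the spherical-coordinate case analysis of \ref{WedgeBound}: the locations the two top vertices may occupy in the limit are pinned down, and I expect the ridge joining them to be forced to collapse, so that the four non-base edges become concurrent and $P'$ is a pyramid. The cleanest packaging of the last step is by contradiction—were the limit a wedge carrying a ridge of positive length, one of its base edges would violate the stationarity just established in (ii).

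The step I expect to be the main obstacle is exactly this passage from the analytic vanishing of \ref{QuadPerturbCond} to the combinatorial conclusion \emph{pyramid}. Items (i) and (ii) rest on the earlier lemmas and a clean order-of-magnitude comparison, but the limiting configuration is doubly degenerate (height $0$, and the short base edge may also be shrinking), so one must track carefully how the magnitudes $\|v_H\|$ behave as the height tends to $0$—they need not stay bounded—in order to read off the correct limiting angle relations rather than a spurious collapse of the base, and then check that those relations are consistent only with a single apex.
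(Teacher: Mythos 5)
Your treatments of (i) and (ii) are correct and essentially identical to the paper's. Part (i) is exactly the deduction the paper makes from \ref{WedgeBound} (vanishing top-vertex curvature plus the dihedral lower bound forces the height to $0$). For (ii), the paper packages the argument as two one-sided perturbations rather than an interior critical point: minimality of $P_i$ within its face-count class forces $R(P_i',\cdot)>0$, hence $R\geq 0$ in the limit by continuity, and if $R(P',L)>0$ strictly, then applying the \emph{inverse} rotation to $P_i$ at $F_i$ gives $E'(0)<0$ of order $\mathrm{diam}(F_i)$ against $V'(0)$ of order $\mathrm{diam}(F_i)^2$, so $M'(0)<0$ for small faces, a contradiction. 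That is the same scaling comparison you make, so (ii) stands.

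The genuine gap is in (iii). The paper does not obtain ``pyramid'' from the stationarity conditions of (ii) by any Gauss-image or angle-identity analysis; it uses a finite, global move that your proposal never considers: replace $P_i$ by $P^*_i$, the protruding polyhedron obtained by deleting the limiting halfspace of $F_i$ entirely. This removes one face, strictly increases volume, and---precisely because (ii) makes the quantities $R(P_i',L)$ arbitrarily small---changes total edge length by roughly $-\,|H_1'-H_2'|$ (ridge length of the normalized wedge) times $\mathrm{diam}(F_i)$. If the ridge had positive length, $P^*_i$ would be a convex polyhedron with fewer faces and strictly smaller Melzak ratio than $P_i$, contradicting the construction of \ref{MinimizingSequence}; hence $H_1'=H_2'$. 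Note that this exploits minimality \emph{across} face counts, not within a fixed face count, which is exactly what an infinitesimal stationarity argument cannot see. Your fallback claim---that a wedge with a positive-length ridge would already violate the first-order conditions of (ii)---is unjustified and almost certainly false as stated: (ii) is only a first-order system, and the paper's own Remark \ref{CleanCond} together with its closing conjecture shows that this system has a nontrivial solution set even among degenerate pyramids, so it cannot by itself pin down the combinatorial type of the limit. Likewise, rerunning the spherical case analysis of \ref{WedgeBound} addresses curvature lower bounds, not the stationarity system, and there is no indication it forces the ridge to collapse. What is missing, concretely, is the comparison with $P^*_i$; once you have it, (iii) follows in two lines from (ii).
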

\begin{proof}
    Statement (i) follows directly from \ref{WedgeBound}.
    
    For notational convenience we will denote the expression as $R(\cdot',L)$, that is the negative instantaneous change in edge length of a protruding wedge when perturbed along edge $L$ as described in \ref{QuadPerturbCond}. It is immediately clear by construction of \ref{MinimizingSequence} that the $R(P'_i,\cdot)$ must be positive, by continuity it is non-negative for any edge of $P'$.

    If $R(P',L)>0$ is positive then for large enough $i$ we have $R(P'_i,L)>R(P',L)/2$. Now apply the inverse perturbation in the to $P_i$ at $F_i$, recall from earlier:

    $$M'(0)=\frac{3E(0)^2}{V(0)}E'(0)-\frac{E(0)^3}{V(t)^2}V'(0)$$

    Here $E'(0)$ is negative and scales w.r.t. the diameter of $F_i$ while $V'(0)$ is proportionate to the diameter of $F_i$ squared. For small enough a face this contradicts the construction of \ref{MinimizingSequence}, proving statement (ii).\\

    For statement (iii) we apply statement (ii). For large enough $i$ then  $R(P'_i,L)$ becomes arbitrarily small, so replacing $P_i$ with $P^*_i$ decreases edge length on the order of $|H'_1 - H'_2|$ times the diameter of $F_i$. The construction of \ref{MinimizingSequence} then assures us that $H'_1 = H'_2$.
\end{proof}

\begin{remark} \label{CleanCond}
    We know $P'$ is a degenerate pyramid we can rephrase the conditions from (ii). View the base vertices of $P'$ as four vectors $p_i\in \mathbb{R}^2$ and the top vertex at $0$.
$$F(i):=|p_i|\left(1 - \frac{<p_i - p_{i+1},p_i>}{|p_i||p_{i+1}-p_i|} - \frac{ <p_i -p_{i-1},p_i>}{|p_i||p_{i-1}-p_i|}\right)$$
Letting $p_0 = p_4$ we can write (ii) as $F(1)=-F(2)=F(3)=-F(4)$.
\end{remark}

\begin{corollary}
There number of quadrilateral faces with good protruding wedges and two adjacent angles of less than $\pi/2-c$ on elements of \ref{MinimizingSequence} is bounded for any $c$.
\end{corollary}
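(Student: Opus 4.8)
The plan is to argue by contradiction, extracting the limiting pyramid $P'$ and colliding the angle hypothesis with the sign-alternation relations of Remark \ref{CleanCond}. Suppose that for some $c>0$ there are unboundedly many quadrilateral faces with good protruding wedges whose base has two adjacent interior angles below $\pi/2-c$. Then the number of quadrilateral faces with good wedges is in particular unbounded, so I can run exactly the limiting construction preceding Theorem \ref{QuadProperty}, but now choosing the sequence of faces $F_i$ among those carrying the two--small--angle property. Since interior angles depend continuously on the vertices and the normalized wedges converge to $P'$, the limiting degenerate (height-$0$) quadrilateral pyramid $P'$ inherits two adjacent base angles that are $\le \pi/2-c$, while Theorem \ref{QuadProperty}(ii) together with Remark \ref{CleanCond} gives $F(1)=-F(2)=F(3)=-F(4)$ for its base vectors $p_i\in\mathbb{R}^2$ (apex at $0$).

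Next I would extract the geometry hidden in $F(i)$. For each $i$, let $\beta_i^{+}$ (resp. $\beta_i^{-}$) be the interior angle at $p_i$ of the flat triangle $0\,p_i\,p_{i+1}$ (resp. $0\,p_i\,p_{i-1}$), which is precisely the angle whose cosine is the first (resp. second) quotient appearing in Remark \ref{CleanCond}; thus
\[
F(i)=|p_i|\bigl(1-\cos\beta_i^{+}-\cos\beta_i^{-}\bigr).
\]
Because the apex projects into the interior of the convex base, the ray from $p_i$ toward $0$ lies inside the interior angle at $p_i$, so that interior angle equals $\phi_i=\beta_i^{+}+\beta_i^{-}$. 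A sum-to-product estimate then yields, for $\phi_i<\pi$,
\[
\cos\beta_i^{+}+\cos\beta_i^{-}=2\cos\tfrac{\phi_i}{2}\cos\tfrac{\beta_i^{+}-\beta_i^{-}}{2}\ge 2\cos^{2}\tfrac{\phi_i}{2}=1+\cos\phi_i,
\]
using $|\beta_i^{+}-\beta_i^{-}|\le\phi_i$ and monotonicity of cosine on $[0,\pi/2]$. Hence $F(i)\le -|p_i|\cos\phi_i$.

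The contradiction is then immediate. At each of the two adjacent special vertices, say $p_1$ and $p_2$, the limiting angle satisfies $\phi_i\le\pi/2-c$, so $\cos\phi_i\ge\sin c>0$ and $F(i)\le -|p_i|\sin c$. Since the apex is interior to the base we have $|p_i|>0$, whence $F(1)<0$ and $F(2)<0$ strictly. This is incompatible with $F(1)=-F(2)$, which forces $F(1)$ and $F(2)$ to carry opposite signs; so only finitely many such faces can occur, which is the assertion.

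I expect the real friction to be bookkeeping at the limit rather than any single hard estimate. The margin $c$ is exactly what guarantees that the two adjacent angles stay strictly below $\pi/2$ in the limit, keeping $F(1)$ and $F(2)$ bounded away from $0$ so that the sign clash is genuine; with only $\phi_i<\pi/2$ in the sequence the limit could reach $\phi_i=\pi/2$, making $F(1)=-F(2)=0$ admissible. The two points I would take care to justify are (a) that the selected faces simultaneously realize the Theorem \ref{QuadProperty} limit and retain the angle bound, and (b) that the limiting base is a genuine non-degenerate convex quadrilateral with apex in its interior, which is what supplies both $\phi_i=\beta_i^{+}+\beta_i^{-}$ and $|p_i|>0$.
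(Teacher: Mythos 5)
Your proposal is correct and takes essentially the same route as the paper: the paper's one-sentence proof simply picks the limit $P'$ so that it has two adjacent acute base angles and asserts that this contradicts the sign-alternation $F(1)=-F(2)=F(3)=-F(4)$ of Remark \ref{CleanCond}. Your sum-to-product estimate $F(i)\le -|p_i|\cos\phi_i<0$ at a strictly acute vertex is exactly the implicit content of that contradiction, so you have only (usefully) filled in details and caveats the paper leaves unstated.
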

\begin{proof}
    Should this be the case we can pick $P'$ to have two adjacent acute angles, contradicting \ref{CleanCond} 
\end{proof}
\begin{corollary}
    The number of quadrilateral faces with good protruding wedges and two adjacent angles of greater than $\pi/3$ on elements of \ref{MinimizingSequence} is bounded.
\end{corollary}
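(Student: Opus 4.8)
The plan is to argue by contradiction and reduce, exactly as in the previous corollary, to the rigidity recorded in \ref{CleanCond}. If the number of quadrilateral faces with good protruding wedges and two adjacent angles exceeding $\pi/3$ were unbounded along \ref{MinimizingSequence}, then passing to this sub-collection and running the normalization and compactness construction that precedes \ref{QuadProperty} produces a limiting normalized wedge $P'$. By \ref{QuadProperty} this $P'$ is a degenerate height-$0$ pyramid with $F(1)=-F(2)=F(3)=-F(4)$, and by continuity it retains two adjacent interior angles that are $\geq \pi/3$. Everything then comes down to showing that such an angle configuration is incompatible with the alternating relation of \ref{CleanCond}.

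Write $A_i$ for the interior angle of the degenerate base at $p_i$, and let $\theta_i^{+},\theta_i^{-}$ be the two pieces into which the segment to the apex (placed at $0$) cuts $A_i$, so $A_i=\theta_i^{+}+\theta_i^{-}$ and $F(i)=|p_i|\,(1-\cos\theta_i^{+}-\cos\theta_i^{-})$. The engine of the argument is that the sign of $F(i)$ is controlled by $A_i$ through the identity $\cos\theta_i^{+}+\cos\theta_i^{-}=2\cos(A_i/2)\cos\!\big((\theta_i^{+}-\theta_i^{-})/2\big)$; this is the large-angle counterpart of the implication behind the previous corollary, where $A_i<\pi/2$ forced $F(i)<0$ and hence forbade two adjacent acute angles. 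Here I want the mirror implication: a sufficiently large $A_i$ forces $F(i)>0$. Granting it, two adjacent angles both large would make two adjacent $F$'s positive, contradicting the fact that \ref{CleanCond} forces the signs of $F$ to alternate around the quadrilateral; the degenerate all-zero case is killed simultaneously, since a strict angle inequality renders the corresponding $F(i)$ strictly signed, hence nonzero.

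The main obstacle is the sharpness of the threshold. The sign-only version of the implication is governed by the extremal splits of $A_i$: the balanced split gives $\cos\theta_i^{+}+\cos\theta_i^{-}=2\cos(A_i/2)$, so $F(i)>0$ is guaranteed for \emph{every} split only once $A_i>2\pi/3$, whereas for $A_i$ merely above $\pi/3$ an unbalanced split can still leave $F(i)<0$. Thus the cheap argument yields only the weaker bound $2\pi/3$, and to descend to the stated $\pi/3$ one must exploit the magnitudes, not just the signs, in \ref{CleanCond}. Concretely I would take the relation $F(1)+F(2)=0$ across the shared edge $p_1p_2$, eliminate $|p_1|,|p_2|$ by the law of sines in the triangle with apex $0$ (so $|p_1|\propto\sin\theta_2^{-}$ and $|p_2|\propto\sin\theta_1^{+}$), and reduce the hypothesis $A_1,A_2>\pi/3$ to a trigonometric inequality in $\theta_1^{\pm},\theta_2^{\pm}$ subject to the triangle constraint $\theta_1^{+}+\theta_2^{-}<\pi$ (bringing in the neighbouring relation $F(2)+F(3)=0$ if the single edge relation proves too weak). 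Showing this inequality is unsatisfiable is where the real content lies; one must also take care to promote the limiting $\geq\pi/3$ back to the strict $>\pi/3$ present along the sequence, so that the boundary case $A_i=\pi/3$ inherited in the limit does not slip through. With that trigonometric step secured, the corollary closes verbatim from the compactness setup, just as the previous one did.
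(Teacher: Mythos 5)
Your reconstruction of the intended mechanism is exactly the paper's: pass to the limiting degenerate pyramid $P'$ via the compactness setup, invoke \ref{QuadProperty} and \ref{CleanCond}, and derive a contradiction from the signs of the $F(i)$, just as in the preceding corollary (whose proof the paper's ``As above'' refers to). Your sign analysis is also correct: writing $A_i=\theta_i^++\theta_i^-$, the identity $\cos\theta_i^++\cos\theta_i^-=2\cos(A_i/2)\cos\bigl((\theta_i^+-\theta_i^-)/2\bigr)$ shows that $A_i<\pi/2$ forces $F(i)<0$ (which is what makes the previous corollary work), while $F(i)>0$ is forced for \emph{every} split only when $A_i>2\pi/3$; for $A_i\in(\pi/3,2\pi/3)$ the sign of $F(i)$ is genuinely undetermined (e.g.\ $A_i=\pi/2$ with a balanced split gives $\cos\theta_i^++\cos\theta_i^-=\sqrt{2}>1$, hence $F(i)<0$). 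So your argument, as you admit, proves the corollary with $2\pi/3$ in place of $\pi/3$, and the descent to $\pi/3$ --- the trigonometric inequality you propose to extract from $F(1)+F(2)=0$ and the law of sines --- is left unproven. That is a genuine gap in the proposal: the stated threshold is never established.

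You should know, however, that this gap is not yours alone. The paper's entire proof of this corollary is the phrase ``As above,'' i.e.\ precisely the sign-alternation argument you reconstructed, and under the natural reading of ``two adjacent angles'' as interior angles of the quadrilateral at adjacent vertices, that argument only reaches the threshold $2\pi/3$: two adjacent angles exceeding $2\pi/3$ force two adjacent $F(i)>0$, contradicting $F(i)+F(i+1)=0$, but two adjacent angles exceeding $\pi/3$ force nothing about the signs. So either the statement carries a typo (with $2\pi/3$ intended), or it silently relies on the magnitude analysis you outline, which neither you nor the paper carries out. You also correctly flag a second defect shared with the paper: unlike the previous corollary, the statement has no buffer constant $c$, so the limiting wedge only inherits angles $\geq\pi/3$, and the compactness step really needs ``greater than $\pi/3+c$'' (or $2\pi/3+c$) along the sequence to preserve a strict inequality in the limit. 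In short, your proposal matches the paper's approach, is more honest about its reach, and in doing so exposes that the paper's own one-line proof supports only the $2\pi/3$ version of the claim.
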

\begin{proof}
    As above.
\end{proof}
While we can certainly apply the strategy of the triangular faces to a host of special cases of quadrilateral faces (e.g. appropriate approximately rectangular faces) it does not bring us significantly closer to eliminating the possibility of an unbounded number of quadrilateral faces. We leave this with a conjecture that is almost certainly easy to answer, but it has eluded us:
\begin{conjecture}
The conditions from \ref{CleanCond} are satisfied only when $F(i)=0$.
\end{conjecture}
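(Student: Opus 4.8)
The plan is to reduce the alternating system to a single global scalar identity and then show that identity forces degeneracy, so that the conditions of \ref{CleanCond} can hold only where every $F(i)$ vanishes. First I would rewrite $F(i)$ in purely metric terms. Writing $r_i=|p_i|$ for the lateral edge lengths and $\beta_i^{+}=\angle\,0\,p_i\,p_{i+1}$, $\beta_i^{-}=\angle\,0\,p_i\,p_{i-1}$ for the two angles that the segment from $p_i$ to the apex makes with the base edges at $p_i$, a direct computation gives
$$F(i)=r_i\bigl(1-\cos\beta_i^{+}-\cos\beta_i^{-}\bigr),$$
since $\tfrac{\langle p_i-p_{i+1},\,p_i\rangle}{|p_{i+1}-p_i|}=r_i\cos\beta_i^{+}$ and symmetrically for the other term.

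Next I would record the elementary projection identity, valid on every base edge, that the signed projections of the two apex–vertex segments onto the line through $p_i,p_{i+1}$ add up to the edge length, namely $r_i\cos\beta_i^{+}+r_{i+1}\cos\beta_{i+1}^{-}=|p_{i+1}-p_i|$. Summing this over the four edges cancels all the cosine contributions and yields the clean identity
$$\sum_{i=1}^{4}F(i)=\sum_{i=1}^{4}r_i-\operatorname{per}(Q),$$
where $\operatorname{per}(Q)$ is the perimeter of the base quadrilateral $Q=p_1p_2p_3p_4$. The condition $F(1)=-F(2)=F(3)=-F(4)$ forces $\sum_i F(i)=0$, so, granted the alternation, it is equivalent to the single scalar equation $\sum_i r_i=\operatorname{per}(Q)$, and the conjecture reduces to showing this equality cannot hold non-degenerately.

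The heart of the matter is therefore the strict inequality $\sum_{i=1}^{4} r_i<\operatorname{per}(Q)$ whenever the apex $0$ lies in the interior of the convex quadrilateral $Q$ (which is the case when $P'$ is a genuine degenerate pyramid). I would prove it from the classical triangle fact that for $O$ interior to $ABC$ one has $OA+OB<CA+CB$, applied across the fan triangulation $0p_ip_{i+1}$ with the leftover diagonal contributions arranged to cancel; equivalently, fixing the $r_i$ and minimizing $\operatorname{per}(Q)$ over the fan angles $\delta_i=\angle p_i0p_{i+1}$ (subject to $\sum_i\delta_i=2\pi$) drives the configuration to the tangential one, where the claim becomes $\sum_i \tfrac{2\cos(\theta_i/2)-1}{\sin(\theta_i/2)}>0$ for the vertex angles $\theta_i$ of $Q$ with $\sum_i\theta_i=2\pi$. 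With this in hand, $\sum_i F(i)<0$ for every non-degenerate interior-apex pyramid, which is incompatible with the requirement $\sum_i F(i)=0$. Hence the conditions of \ref{CleanCond} can only be met in the limit $\sum_i r_i\to\operatorname{per}(Q)$, where $Q$ collapses or the apex reaches $\partial Q$; a short direct check of that boundary case then shows each $F(i)\to0$, giving $F(i)=0$ as asserted.

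The step I expect to be the real obstacle is the inequality $\sum_i r_i<\operatorname{per}(Q)$ itself. It is genuinely a small-$n$ phenomenon: for a regular hexagon with $O$ at the center one has $\sum_i r_i=\operatorname{per}(Q)$ exactly, and for $n\ge 7$ the inequality reverses, so any proof must use that there are only four vertices and cannot rest on convexity of $\theta\mapsto \tfrac{2\cos(\theta/2)-1}{\sin(\theta/2)}$ — this function has an inflection on $(0,\pi)$, so Jensen does not apply. The delicate points are the unequal-$r_i$ case and the boundary behaviour as some $\delta_i\to0$; I expect the tangential reduction, combined with a direct verification that the four-term trigonometric sum is minimized at $\theta_i=\pi/2$, to be the cleanest route, and I would formulate the strengthened statement $\sum_i F(i)<0$ as a standalone lemma, since it also bounds the number of small quadrilateral faces outright.
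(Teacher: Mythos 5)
The paper itself gives no proof here --- this statement is left as an open conjecture (``it has eluded us'') --- so your proposal stands or falls on its own. Its first step is correct and genuinely useful: with $F(i)=r_i\bigl(1-\cos\beta_i^{+}-\cos\beta_i^{-}\bigr)$ and the projection identity $r_i\cos\beta_i^{+}+r_{i+1}\cos\beta_{i+1}^{-}=|p_{i+1}-p_i|$, the sum does telescope to $\sum_{i=1}^{4}F(i)=\sum_{i=1}^{4}r_i-\operatorname{per}(Q)$, and the alternating condition does force this to vanish. The fatal step is the one you yourself flag as the heart of the matter: the inequality $\sum_i r_i<\operatorname{per}(Q)$ for an apex interior to a convex quadrilateral is simply false. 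Take $Q$ with vertices $A=(0,0)$, $B=(a,\epsilon)$, $C=(1,0)$, $D=(a,-\epsilon)$ and apex $O=(1-\delta,0)$, which is interior to $Q$ for all small $\delta>0$. As $\epsilon\to0$ one has $\operatorname{per}(Q)\to 2$, while $\sum_i r_i=(1-\delta)+\delta+2\sqrt{(1-\delta-a)^2+\epsilon^2}\to 3-2\delta-2a$, so for small $a,\delta,\epsilon$ the distance sum exceeds the perimeter by nearly $1$. The hexagon/heptagon heuristic misled you: the failure at $n=4$ comes not from regular configurations but from skewed ones with the apex near a vertex (the distance sum is a convex function of the apex, hence maximized at a vertex, and the comparison at $C$ is $CA$ versus $AB+AD$, which goes the wrong way for thin quadrilaterals). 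Nothing implicit in \ref{CleanCond} excludes this configuration: the quadrilateral is convex and the apex lies in its interior.

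Moreover, the failure is not repairable within your strategy. Interpolating continuously between a square with central apex (where $\sum_i r_i<\operatorname{per}(Q)$) and the example above (where the inequality reverses) yields, by the intermediate value theorem, a large family of visibly non-degenerate configurations with $\sum_i r_i=\operatorname{per}(Q)$ exactly, i.e.\ with $\sum_i F(i)=0$ while the individual $F(i)$ are far from $0$. So no argument using only the aggregate $\sum_i F(i)$ --- which is all your reduction retains of the three equations $F(1)=-F(2)=F(3)=-F(4)$ --- can possibly prove the conjecture; the alternation itself, not merely its trace, must enter. Your telescoping identity is still worth recording, since it shows the conjecture is equivalent to proving that on the codimension-one set $\sum_i r_i=\operatorname{per}(Q)$ the alternation forces each $F(i)=0$; but the proposed key lemma is false, and the standalone lemma $\sum_i F(i)<0$ you suggest at the end (to bound the number of small quadrilateral faces outright) fails for the same reason.
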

\bibliographystyle{amsplain}
\bibliography{lit}
\end{document}